\documentclass[leqno,12pt]{amsart}
\usepackage{url}
\usepackage{latexsym}
\usepackage{amssymb,amsmath,amsfonts}
\usepackage{mathtools}
\usepackage{array}
\usepackage[utf8]{inputenc}
\usepackage[usenames,dvipsnames]{color}

\newtheorem{theorem}{Theorem}[section]

\newtheorem{lemma}[theorem]{Lemma}
\newtheorem{proposition}[theorem]{Proposition}
\newtheorem{corollary}[theorem]{Corollary}
\theoremstyle{definition}
\newtheorem{definition}[theorem]{Definition}
\newtheorem{example}[theorem]{Example}

\newtheorem{remark}[theorem]{Remark}

\numberwithin{equation}{section}
\usepackage[left=2.5cm,right=2.5cm,top=1.8cm,bottom=1.8cm]{geometry}

\begin{document}

\title[Countable hybrid methods with monotone inclusions]{A hybrid method for countable equilibrium, variational inequality and maximal monotone inclusion problems with fixed point constraints}

\subjclass[2020]{47H09, 47H05, 47J25, 47J05, 49J40, 65K15}
\keywords{Equilibrium problem, variational inequality problem, maximal monotone operator, generalized $J_{*}$-nonexpansive mapping, generalized projection, hybrid method, strong convergence}

\maketitle
\begin{center}
 Markjoe O. Uba\\
 \vspace{.1in}
Department of Mathematical Sciences,\\ 
Northern Illinois University,\\ 
DeKalb, IL 60115, USA\\
{\tt markjoeuba@gmail.com}  
\end{center}

\begin{abstract}
Let $C$ be a nonempty closed and convex subset of a uniformly smooth and uniformly convex real Banach space $E$ with dual space $E^{*}$. We introduce a hybrid projection method for approximating a common element of four classes of constraints: the set of fixed points of a countable family of generalized nonexpansive-type maps, the solution sets of countably many equilibrium problems, the solution sets of countably many variational inequality problems, and the zero sets of countably many maximal monotone operators. The algorithm combines equilibrium resolvents, variational inequality resolvents, generalized resolvents of maximal monotone operators and a shrinking projection step. Under precise monotonicity, continuity and closedness assumptions, we prove that the generated sequence converges strongly to the generalized projection of the initial point onto the common solution set. We also establish residual convergence, derive convex minimization consequences, present a finite-truncation result, and give an illustrative Hilbert-space specialization showing why the countable setting cannot, in general, be reduced to a finite-family theorem.
\end{abstract}

\section{Introduction}

Let $E$ be a real Banach space with topological dual $E^{*}$ and let $C$ be a nonempty closed and convex subset of $E$. The variational inequality problem associated with a mapping $A:C\to E^{*}$ is to find a point $x^{*}\in C$ such that
\begin{equation}\label{introVI}
\langle y-x^{*},Ax^{*}\rangle\geq 0,\qquad \forall y\in C.
\end{equation}
The set of solutions of \eqref{introVI} is denoted by $VI(C,A)$. This problem, initiated in the classical work of Stampacchia \cite{Stampacchia1964}, is important in nonlinear analysis because it contains, as special cases, constrained optimization problems, complementarity problems and several models arising from mechanics and economics.

Let $f:JC\times JC\to \mathbb{R}$ be a bifunction, where $J$ denotes the normalized duality mapping on $E$. The equilibrium problem considered in this paper is to find a point $x^{*}\in C$ such that
\begin{equation}\label{introEP}
f(Jx^{*},Jy)\geq 0,\qquad \forall y\in C.
\end{equation}
The solution set of \eqref{introEP} is denoted by $EP(f)$. Equilibrium problems, introduced in a systematic form by Blum and Oettli \cite{BlumOettli1994} and further developed in iterative and variational frameworks such as \cite{CombettesHirstoaga2005}, provide a unified formulation for optimization problems, variational inequality problems, saddle point problems and Nash equilibrium problems.

Another closely related problem is the monotone inclusion problem. Its foundations in operator theory go back to Minty and Rockafellar; see, for example, \cite{Minty1962,Rockafellar1970}. Let $M:E\to 2^{E^{*}}$ be a maximal monotone operator. The zero problem for $M$ is to find $x\in E$ such that
\begin{equation}\label{introM}
0\in Mx.
\end{equation}
The solution set of \eqref{introM} is denoted by $M^{-1}0$ or $\operatorname{zer}M$. This class of problems is broad. For example, if $M=\partial h$, where $h:E\to (-\infty,+\infty]$ is a proper, convex and lower semicontinuous function, then \eqref{introM} is equivalent to the convex minimization problem
\[
\min_{x\in E}h(x).
\]
Thus, any algorithm which treats zeros of maximal monotone operators together with fixed point, equilibrium and variational inequality constraints has a natural relevance to convex optimization and nonlinear operator theory.

In a smooth Banach space, the Lyapunov functional $\phi:E\times E\to\mathbb{R}$ is defined by
\begin{equation}\label{phi}
\phi(x,y)=\|x\|^{2}-2\langle x,Jy\rangle+\|y\|^{2},\qquad x,y\in E.
\end{equation}
If $E$ is a Hilbert space, then $J$ is the identity map and $\phi(x,y)=\|x-y\|^{2}$. Hence, $\phi$ plays the role of the square of the norm in Banach spaces and is one of the main tools in the study of strong convergence theorems in smooth Banach spaces.

Recently, several authors have studied hybrid and projection-type methods for fixed point problems, variational inequality problems, maximal monotone inclusions and equilibrium problems in Banach spaces; see, for instance, \cite{ChidumeIdu2016,ChidumeOtuboEzeaUba2017,KlineamSuantaiTakahashi2012}. In particular, hybrid methods are useful because they often yield strong convergence. In \cite{UbaCarpathian2023}, a hybrid scheme was introduced for approximating a common element of the set of $J$-fixed points of a countable family of generalized $J_{*}$-nonexpansive maps, the common solution set of a finite family of variational inequality problems and the common solution set of a finite family of equilibrium problems. However, the variational inequality and equilibrium components in that work were finite families, and monotone inclusion problems governed by maximal monotone operators were not included in the algorithm.

The purpose of the present paper is to combine two natural extensions of that result. First, we replace the finite families of equilibrium and variational inequality problems by countable families. Secondly, we incorporate the zero problem for a countable family of maximal monotone operators. Thus, the target set in this paper is of the form
\[
B=F_J(\Gamma)\cap\left(\bigcap_{i=1}^{\infty}EP(f_i)\right)
\cap\left(\bigcap_{j=1}^{\infty}VI(C,A_j)\right)
\cap\left(\bigcap_{k=1}^{\infty}M_k^{-1}0\right),
\]
where $\Gamma$ is a family of generalized $J_{*}$-nonexpansive maps, $\{f_i\}$ is a countable family of equilibrium bifunctions, $\{A_j\}$ is a countable family of continuous monotone mappings and $\{M_k\}$ is a countable family of maximal monotone operators.

The main difficulty in treating countable families is that the algorithm must visit each problem infinitely often. For this reason, we use index maps $\sigma,\tau,\rho:\mathbb{N}\to\mathbb{N}$ such that every positive integer occurs infinitely many times in each sequence. At the $n$th step, the algorithm uses $f_{\sigma(n)}$, $A_{\tau(n)}$ and $M_{\rho(n)}$. This allows the proof to select subsequences for each fixed component and verify membership of the strong limit in every individual solution set by explicit convergence estimates.

The main contribution of this paper is not merely the addition of another hybrid algorithm, nor simply a finite-family theorem rewritten with countably many indices. The countable setting introduces a genuine activation issue: every component problem must be visited infinitely often while the shrinking projection sets remain nonempty, closed, convex and nested. We address this issue by using index maps and by constructing a single shrinking generalized projection scheme which simultaneously treats four countable structures: a countable family of generalized $J_{*}$-nonexpansive mappings, countably many equilibrium problems, countably many variational inequality problems and countably many maximal monotone inclusion problems. The algorithm approximates the generalized projection of the initial point onto the full infinite intersection. We also include a finite-truncation proposition and concrete examples in classical Banach and Hilbert spaces showing that the countable setting cannot, in general, be recovered from a finite-family result.

For clarity, the relationship between the present work and some representative hybrid methods is summarized below. The table is not meant to be exhaustive; it only indicates the particular gap addressed here.
\begin{center}
\footnotesize
\setlength{\tabcolsep}{3pt}
\begin{tabular}{|>{\raggedright\arraybackslash}p{3.1cm}|>{\raggedright\arraybackslash}p{2.8cm}|>{\raggedright\arraybackslash}p{2.3cm}|>{\raggedright\arraybackslash}p{2.3cm}|>{\raggedright\arraybackslash}p{3.0cm}|}
\hline
Reference & Fixed point component & Equilibrium component & Variational inequality component & Maximal monotone component \\ \hline
\cite{UbaCarpathian2023} & countable family & finite family & finite family & absent \\ \hline
\cite{ZegeyeShahzad2011,ZegeyeShahzad2014} & fixed point component present & finite-family framework & finite-family framework & absent from the main scheme considered there \\ \hline
Present paper & countable generalized $J_{*}$-nonexpansive family & countable family & countable family & countable family \\ \hline
\end{tabular}
\end{center}

\section{Preliminaries}

Throughout this paper, $E$ will denote a real Banach space with dual space $E^{*}$. We denote by $x_n\rightharpoonup x$ and $x_n\to x$ the weak and strong convergence of $\{x_n\}$ to $x$, respectively. The normalized duality mapping $J:E\to 2^{E^{*}}$ is defined by
\[
Jx:=\{x^{*}\in E^{*}:\langle x,x^{*}\rangle=\|x\|\|x^{*}\|,\ \|x\|=\|x^{*}\|\}.
\]
It is well known from the geometry of Banach spaces that if $E$ is smooth, strictly convex and reflexive, then $J$ is single-valued, one-to-one and onto; see, for example, \cite{Cioranescu1990,Takahashi2000}. In this case the inverse duality mapping from $E^{*}$ into $E$ will be denoted by
\[
J_{*}=J^{-1}:E^{*}\to E.
\]
Moreover, if $E$ is uniformly smooth, then $J$ is uniformly continuous on bounded subsets of $E$; if $E$ is uniformly smooth and uniformly convex, then $J_{*}=J^{-1}$ is uniformly continuous on bounded subsets of $E^{*}$.

Let $E$ be a smooth real Banach space. The Lyapunov functional $\phi:E\times E\to\mathbb{R}$ is defined by \eqref{phi}, and has the following property
\begin{equation}\label{phibounds}
(\|x\|-\|y\|)^2\leq \phi(x,y)\leq (\|x\|+\|y\|)^2,
\end{equation}
for all $x,y\in E$.

\begin{definition}
Let $T:C\to E^{*}$ be a mapping. A point $p\in C$ is called a $J$-fixed point of $T$ if
\[
Tp=Jp.
\]
The set of $J$-fixed points of $T$ will be denoted by $F_J(T)$.
If $\Gamma$ is a family of mappings from $C$ into $E^{*}$, we write
\[
F_J(\Gamma):=\bigcap_{T\in \Gamma}F_J(T).
\]
\end{definition}

\begin{definition}
A map $T:C\to E^{*}$ is called generalized $J_{*}$-nonexpansive if $F_J(T)\neq\emptyset$ and
\[
\phi(p,(J_{*}\circ T)x)\leq \phi(p,x),\qquad \forall x\in C,\ p\in F_J(T).
\]
\end{definition}

\begin{definition}
A map $T:C\to E^{*}$ is called $J_{*}$-closed if $J_{*}\circ T:C\to E$ is closed, that is, whenever $x_n\to x$ and $(J_{*}\circ T)x_n\to y$, then
\[
y=(J_{*}\circ T)x.
\]
\end{definition}

For the equilibrium problems, we shall assume that each bifunction $f_i:JC\times JC\to\mathbb{R}$ satisfies the following conditions:
\begin{itemize}
\item[(A1)] $f_i(x^{*},x^{*})=0$ for all $x^{*}\in JC$;
\item[(A2)] $f_i$ is monotone, that is,
\[
f_i(x^{*},y^{*})+f_i(y^{*},x^{*})\leq 0,
\]
for all $x^{*},y^{*}\in JC$;
\item[(A3)] for all $x^{*},y^{*},z^{*}\in JC$,
\[
\limsup_{t\downarrow 0}f_i(tz^{*}+(1-t)x^{*},y^{*})\leq f_i(x^{*},y^{*});
\]
\item[(A4)] for each $x^{*}\in JC$, $f_i(x^{*},\cdot)$ is convex and lower semicontinuous.
\end{itemize}

\begin{definition}
Let $M:E\to 2^{E^{*}}$ be a multivalued map. The graph of $M$ is defined by
\[
G(M)=\{(x,x^{*})\in E\times E^{*}:x^{*}\in Mx\}.
\]
The operator $M$ is called monotone if
\[
\langle x-y,x^{*}-y^{*}\rangle\geq 0,
\]
for all $(x,x^{*}),(y,y^{*})\in G(M)$. It is called maximal monotone if its graph is not properly contained in the graph of any other monotone operator.
\end{definition}

Let $M:E\to 2^{E^{*}}$ be a maximal monotone operator. For $r>0$, the generalized resolvent of $M$ is denoted by
\[
G_r^M=(J+rM)^{-1}J.
\]
Thus, if $q=G_r^Mx$, then
\begin{equation}\label{resolventidentity}
Jx\in Jq+rMq,
\end{equation}
or equivalently,
\[
\frac{Jx-Jq}{r}\in Mq.
\]

We shall use the following standard facts. 

\begin{lemma}[Uniform convexity inequality \cite{KamimuraTakahashi2002}]\label{glemma}
Let $E$ be a uniformly convex Banach space and let $R>0$. Then there exists a continuous, strictly increasing and convex function $g:[0,2R)\to[0,\infty)$ with $g(0)=0$ such that, for every integer $N\geq2$, every finite family $x_1,x_2,\ldots,x_N\in B_R(0)$ and every choice of positive numbers $\lambda_1,\lambda_2,\ldots,\lambda_N$ with $\sum_{i=1}^{N}\lambda_i=1$, one has, for all $i,j\in\{1,2,\ldots,N\}$ with $i<j$,
\[
\left\|\sum_{n=1}^{N}\lambda_nx_n\right\|^2\leq \sum_{n=1}^{N}\lambda_n\|x_n\|^2-\lambda_i\lambda_jg(\|x_i-x_j\|).
\]
\end{lemma}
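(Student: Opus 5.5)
The argument rests on the modulus of convexity of $E$ restricted to the ball $B_R(0)$, and I follow the Xu/Kamimura–Takahashi approach. First I produce a two-point inequality, then I extend it to $N$ points by grouping.

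\emph{Step 1: the two-point case.} I will show that there is a continuous, strictly increasing, convex $g:[0,2R)\to[0,\infty)$ with $g(0)=0$ such that
\[
\|\lambda x+(1-\lambda)y\|^2\le \lambda\|x\|^2+(1-\lambda)\|y\|^2-\lambda(1-\lambda)g(\|x-y\|)
\]
for all $x,y\in B_R(0)$ and $\lambda\in[0,1]$. This is Xu's characterization of uniform convexity. To build $g$, start from the modulus of convexity $\delta_E$ and set
\[
h(t):=\inf\Bigl\{\tfrac{\lambda\|x\|^2+(1-\lambda)\|y\|^2-\|\lambda x+(1-\lambda)y\|^2}{\lambda(1-\lambda)} : x,y\in B_R,\ \|x-y\|\ge t,\ 0<\lambda<1\Bigr\}.
\]
Uniform convexity, together with the convexity of $\|\cdot\|^2$, gives $h(t)>0$ for $t>0$ and $h(0)=0$, and $h$ is nondecreasing. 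Replacing $h$ by a convex, continuous, strictly increasing minorant (for instance via $\int_0^t \min\{\cdot\}$ arguments, or by citing Xu's theorem directly) yields $g$. I will simply invoke Xu's result, as the statement attributes the lemma to \cite{KamimuraTakahashi2002}.

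\emph{Step 2: reduction from $N$ points to two.} Fix $i<j$ and set $u:=\frac{\lambda_i x_i+\lambda_j x_j}{\lambda_i+\lambda_j}$. Write the full convex combination as $\mu u+(1-\mu)w$, where $\mu=\lambda_i+\lambda_j$ and $w$ is the normalized combination of the remaining points. If $N=2$ there is no $w$ and Step 1 applies directly. The points $u$ and $w$ lie in $B_R$ because the ball is convex. Convexity of $\|\cdot\|^2$ gives
\[
\Bigl\|\sum_n\lambda_nx_n\Bigr\|^2\le\mu\|u\|^2+(1-\mu)\|w\|^2,
\]
and again $(1-\mu)\|w\|^2\le\sum_{n\ne i,j}\lambda_n\|x_n\|^2$. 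Applying Step 1 to $u$ with weights $\frac{\lambda_i}{\mu}$ and $\frac{\lambda_j}{\mu}$ gives
\[
\mu\|u\|^2\le\lambda_i\|x_i\|^2+\lambda_j\|x_j\|^2-\mu\cdot\frac{\lambda_i\lambda_j}{\mu^2}\,g(\|x_i-x_j\|).
\]
Since $\mu\le1$, we have $\frac{\lambda_i\lambda_j}{\mu}\ge\lambda_i\lambda_j$, so the penalty term is at least $\lambda_i\lambda_j g(\|x_i-x_j\|)$. Summing the three estimates gives the claimed inequality.

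The main obstacle is Step 1, namely producing a single $g$ that is uniform over $B_R$ and has the required regularity (continuous, strictly increasing, convex). I will rely on Xu's inequality rather than reconstructing it. Step 2 is routine bookkeeping, and the only point to watch there is the favorable direction of the factor $1/\mu$.
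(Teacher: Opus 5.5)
Your proof is correct. The paper gives no proof of this lemma; it only cites Kamimura--Takahashi. The inequality available there, Xu's characterization of uniform convexity with exponent $2$ (where the weight $W_2(\lambda)=\lambda^2(1-\lambda)+\lambda(1-\lambda)^2$ reduces to $\lambda(1-\lambda)$), is the two-point inequality of your Step 1.

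Your grouping step is what turns that into the $N$-point statement for an arbitrary pair $i<j$:
\begin{itemize}
\item You write $\sum_n\lambda_nx_n=\mu u+(1-\mu)w$ with $\mu=\lambda_i+\lambda_j$.
\item You use convexity of $\|\cdot\|^2$ twice.
\item You apply the two-point inequality to $x_i,x_j$ with weights $\lambda_i/\mu$ and $\lambda_j/\mu$.
\end{itemize}
This actually gives the slightly stronger penalty $\frac{\lambda_i\lambda_j}{\lambda_i+\lambda_j}\,g(\|x_i-x_j\|)$. That dominates $\lambda_i\lambda_j\, g(\|x_i-x_j\|)$ because $g\geq 0$ and $\mu\leq 1$. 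The same $g$ also serves all pairs and all $N$ at once, which is exactly the uniformity the paper needs when it applies the lemma with $(w_{1,n},w_{\ell,n})$ for every $n$.

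Three minor remarks.

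\emph{Unneeded claim.} The observation that $u,w\in B_R(0)$ is not used. The two-point inequality is applied to the pair $x_i,x_j$, not to $u,w$.

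\emph{The sketch of $h$.} Do not present this as a proof. The nontrivial content of Xu's theorem is that the quotient stays bounded away from zero uniformly as $\lambda\to 0$ or $\lambda\to 1$, after dividing by $\lambda(1-\lambda)$. This does not follow simply from uniform convexity together with convexity of $\|\cdot\|^2$. Citing Xu directly, as you ultimately do, is the right move.

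\emph{Domain of $g$.} If $B_R(0)$ denotes the closed ball, $\|x_i-x_j\|$ can equal $2R$, which lies outside the stated domain $[0,2R)$. Take Xu's $g$, which is defined on $[0,2R]$, and restrict only afterwards (or read $B_R(0)$ as the open ball).
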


\begin{lemma}[Alber-type convergence criterion \cite{Alber1996,KamimuraTakahashi2002}]\label{phitozero}
Let $E$ be a real smooth and uniformly convex Banach space, and let $\{x_n\}$ and $\{y_n\}$ be two sequences in $E$. If either $\{x_n\}$ or $\{y_n\}$ is bounded and $\phi(x_n,y_n)\to 0$ as $n\to\infty$, then
\[
\|x_n-y_n\|\to 0.
\]
\end{lemma}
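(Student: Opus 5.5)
The plan is to reduce the statement to the pointwise inequality
\[
g(\|x-y\|)\leq \phi(x,y),\qquad x,y\in B_R(0),
\]
where $g$ is the function supplied by Lemma~\ref{glemma} for a suitable radius $R$. Once this inequality is in hand, the conclusion follows from the properties of $g$ alone.

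\textbf{Step 1 (both sequences are bounded).} Suppose, say, $\{x_n\}$ is bounded; the other case is symmetric. By \eqref{phibounds},
\[
(\|x_n\|-\|y_n\|)^2\leq\phi(x_n,y_n)\to 0,
\]
so $\|y_n\|\leq\|x_n\|+1$ for all large $n$, and $\{y_n\}$ is bounded too. Choose $R>0$ such that all $x_n,y_n$ lie in the open ball $B_R(0)$. Then $\|x_n-y_n\|<2R$, so $g(\|x_n-y_n\|)$ is defined.

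\textbf{Step 2 (the key inequality).} Fix $x,y\in B_R(0)$ and $t\in(0,1)$. Apply Lemma~\ref{glemma} with $N=2$, $\lambda_1=t$, $\lambda_2=1-t$:
\[
\|tx+(1-t)y\|^2\leq t\|x\|^2+(1-t)\|y\|^2-t(1-t)g(\|x-y\|).
\]
For a lower bound, use that $\tfrac12\|\cdot\|^2$ is convex with derivative $J$ in a smooth space. The subgradient inequality at $y$ gives
\[
\|y+t(x-y)\|^2\geq\|y\|^2+2t\langle x-y,Jy\rangle.
\]
Combining the two bounds, cancelling $\|y\|^2$ and dividing by $t$ yields
\[
(1-t)g(\|x-y\|)\leq\|x\|^2-\|y\|^2-2\langle x-y,Jy\rangle=\|x\|^2-2\langle x,Jy\rangle+\|y\|^2=\phi(x,y),
\]
where we used $\langle y,Jy\rangle=\|y\|^2$. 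Letting $t\downarrow0$ gives $g(\|x-y\|)\leq\phi(x,y)$.

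\textbf{Step 3 (conclusion).} By Step 2, $g(\|x_n-y_n\|)\leq\phi(x_n,y_n)\to0$. Suppose $\|x_n-y_n\|\not\to0$. Then for some $\varepsilon\in(0,2R)$ and some subsequence, $\|x_{n_k}-y_{n_k}\|\geq\varepsilon$. Since $g$ is strictly increasing with $g(0)=0$, this gives $g(\|x_{n_k}-y_{n_k}\|)\geq g(\varepsilon)>0$ along that subsequence, a contradiction.

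The only delicate point is Step 2. One must use the correct one-sided subgradient inequality for $\|\cdot\|^2$ in a smooth space, namely that $2Jy$ is the gradient of $\|\cdot\|^2$ at $y$. One must also fix $R$ before invoking Lemma~\ref{glemma}, so that a single function $g$ serves for all $n$ and its argument stays in $[0,2R)$.
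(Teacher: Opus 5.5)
Your proof is correct. The paper gives no proof of its own for this lemma; it cites Alber and Kamimura--Takahashi, and your argument is essentially the standard Kamimura--Takahashi one: Xu's inequality (Lemma~\ref{glemma} with $N=2$) combined with the first-order inequality for $\|\cdot\|^2$ at $y$ gives $g(\|x-y\|)\leq\phi(x,y)$ on $B_R(0)$, and the conclusion then follows from $(\|x_n\|-\|y_n\|)^2\leq\phi(x_n,y_n)$ and the strict monotonicity of $g$.
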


\begin{definition}
Let $C$ be a nonempty closed and convex subset of a smooth, strictly convex and reflexive Banach space $E$. The generalized projection of $x\in E$ onto $C$ is the unique point
\[
\Pi_Cx=\operatorname*{argmin}_{y\in C}\phi(y,x).
\]
\end{definition}

The following characterization of the generalized projection will be used repeatedly; see, for example, Alber \cite{Alber1996} and Kamimura--Takahashi \cite{KamimuraTakahashi2002}.

\begin{lemma}\label{projectionlemma}
Let $C$ be a nonempty closed and convex subset of a smooth, strictly convex and reflexive Banach space $E$. Then $z=\Pi_Cx$ if and only if
\[
\langle y-z,Jx-Jz\rangle\leq 0,\qquad \forall y\in C.
\]
Moreover,
\[
\phi(y,z)+\phi(z,x)\leq \phi(y,x),\qquad \forall y\in C.
\]
\end{lemma}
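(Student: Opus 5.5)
The statement to prove is Lemma~\ref{projectionlemma}: $z=\Pi_Cx$ if and only if $\langle y-z,Jx-Jz\rangle\le0$ for all $y\in C$, and then $\phi(y,z)+\phi(z,x)\le\phi(y,x)$ for all $y\in C$. The plan is to reduce everything to the three-point identity
\[
\phi(y,x)=\phi(y,z)+\phi(z,x)+2\langle y-z,Jz-Jx\rangle ,
\]
valid for all $x,y,z\in E$ in a smooth space. It follows by expanding the definition \eqref{phi}: the terms $\|z\|^2$ and $-2\langle z,Jz\rangle=-2\|z\|^2$ cancel, and so do the cross terms. With this identity, the variational inequality $\langle y-z,Jx-Jz\rangle\le0$ is the same as $\langle y-z,Jz-Jx\rangle\ge0$. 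Hence the inequality $\phi(y,z)+\phi(z,x)\le\phi(y,x)$ follows immediately once the characterization is established.

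\textbf{Sufficiency.} Assume the variational inequality holds. By the identity, $\phi(y,x)\ge\phi(y,z)+\phi(z,x)\ge\phi(z,x)$ for every $y\in C$, since $\phi\ge0$ by \eqref{phibounds}. So $z$ minimizes $\phi(\cdot,x)$ over $C$. Uniqueness of the minimizer is built into the definition, which rests on strict convexity of $\|\cdot\|^2$ together with reflexivity and coercivity. Therefore $z=\Pi_Cx$.

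\textbf{Necessity.} This is the main step. Let $z=\Pi_Cx$, fix $y\in C$, and set $z_t=z+t(y-z)\in C$ for $t\in(0,1]$. Minimality gives $\phi(z_t,x)-\phi(z,x)\ge0$. Writing $\phi(w,x)=\|w\|^2-2\langle w,Jx\rangle+\|x\|^2$, divide by $t$ and let $t\downarrow0$. The function $w\mapsto\|w\|^2$ is Gâteaux differentiable on a smooth space with derivative $2Jw$, so the limit is
\[
2\langle y-z,Jz\rangle-2\langle y-z,Jx\rangle\ge0 .
\]
This is exactly $\langle y-z,Jx-Jz\rangle\le0$.

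The only delicate point is justifying that derivative. Smoothness gives Gâteaux differentiability of the norm away from $0$, with $\nabla\tfrac12\|w\|^2=Jw$. At $w=0$ the derivative of $\|w\|^2$ is trivially $0=J0$, since $\|tv\|^2/t\to0$. So the formula holds everywhere, and substituting the variational inequality back into the three-point identity gives the stated estimate.
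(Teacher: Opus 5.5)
Your proof is correct and is essentially the standard argument from the references the paper cites (Alber; Kamimura--Takahashi), since the paper itself states the lemma without proof: necessity comes from the G\^ateaux derivative $2(Jz-Jx)$ of $\phi(\cdot,x)$ taken along feasible directions, and sufficiency together with the estimate comes from the three-point identity $\phi(y,x)=\phi(y,z)+\phi(z,x)+2\langle y-z,Jz-Jx\rangle$. One thing to state explicitly: the ``if'' direction needs $z\in C$ as a standing hypothesis, since otherwise $z=x\notin C$ satisfies the inequality trivially; the paper's formulation also leaves this implicit.
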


\begin{lemma}[Equilibrium resolvent \cite{TakahashiZembayashi2008}]\label{EPrezolvent}
Let $C$ be a nonempty closed subset of a smooth, strictly convex and reflexive Banach space $E$ such that $JC$ is closed and convex. Let $f:JC\times JC\to\mathbb{R}$ satisfy $(A1)-(A4)$. For $r>0$ and $x\in E$, define $T_r^f:E\to C$ by
\[
T_r^f x=\left\{z\in C:f(Jz,Jy)+\frac{1}{r}\langle y-z,Jz-Jx\rangle\geq 0,\ \forall y\in C\right\}.
\]
Then $T_r^f$ is single valued, $F(T_r^f)=EP(f)$, $JEP(f)$ is closed and convex, and
\[
\phi(p,T_r^f x)+\phi(T_r^f x,x)\leq \phi(p,x),\qquad \forall p\in EP(f).
\]
\end{lemma}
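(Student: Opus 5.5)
The plan is to prove the four assertions of Lemma~\ref{EPrezolvent} in order: existence and uniqueness of $T_r^fx$, the identification $F(T_r^f)=EP(f)$, the closedness and convexity of $JEP(f)$, and finally the $\phi$-inequality. Throughout I will use the three-point identity for $\phi$,
\[
2\langle y-z,Jz-Jx\rangle=\phi(y,x)-\phi(y,z)-\phi(z,x),
\]
which follows directly from \eqref{phi}.

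\textbf{Uniqueness and the $\phi$-inequality.} Suppose $z_1,z_2$ both belong to $T_r^fx$. Test the defining inequality for $z_1$ at $y=z_2$ and the one for $z_2$ at $y=z_1$, then add. By (A2) the $f$-terms sum to something $\le 0$, which leaves
\[
\langle z_1-z_2,Jz_1-Jz_2\rangle\le 0.
\]
Since $E$ is strictly convex, $J$ is strictly monotone, so $z_1=z_2$. For the inequality, let $p\in EP(f)$ and $z=T_r^fx$, and take $y=p$ in the definition. From (A2), $f(Jz,Jp)\le -f(Jp,Jz)\le 0$, so $\langle p-z,Jz-Jx\rangle\ge 0$. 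The three-point identity with $y=p$ then gives $\phi(p,z)+\phi(z,x)\le\phi(p,x)$.

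\textbf{Fixed points and $JEP(f)$.} If $z=x$, the extra term vanishes and the defining inequality is exactly \eqref{introEP}. Conversely, if $x\in EP(f)$, then $z=x$ satisfies the defining inequality, and uniqueness gives $T_r^fx=x$; hence $F(T_r^f)=EP(f)$.

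For closedness and convexity I will establish a Minty-type characterization. Working in $E^*$ on the closed convex set $K:=JC$, the claim is that
\[
x^*\in JEP(f)\iff f(y^*,x^*)\le 0\quad\text{for all } y^*\in K.
\]
The direction ``$\Rightarrow$'' is (A2). For ``$\Leftarrow$'', set $z_t^*=ty^*+(1-t)x^*\in K$. Using (A1) and (A4),
\[
0=f(z_t^*,z_t^*)\le t f(z_t^*,y^*)+(1-t)f(z_t^*,x^*)\le t f(z_t^*,y^*).
\]
Dividing by $t$ gives $f(z_t^*,y^*)\ge 0$, and letting $t\downarrow 0$ with (A3) gives $f(x^*,y^*)\ge0$. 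Therefore
\[
JEP(f)=\bigcap_{y^*\in K}\{x^*\in K: f(y^*,x^*)\le 0\},
\]
and each set in this intersection is a closed convex sublevel set by (A4). So $JEP(f)$ is closed and convex.

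\textbf{Existence (the main obstacle).} This is the hard step, because after transporting to $K=JC$ the penalty term $y^*\mapsto\langle J_*y^*-J_*z^*,z^*-Jx\rangle$ is not convex in $y^*$. The Blum--Oettli theorem therefore cannot be applied directly to the sum. What I would try first is to keep the two pieces separate. The first piece is the convex, lsc, monotone part $f$ on the closed convex set $K$. The second piece is $\frac1r\langle J_*y^*-J_*z^*,z^*-Jx\rangle$, which I will handle through the identity
\[
\langle y-z,Jz-Jx\rangle=\tfrac12\bigl[\phi(y,x)-\phi(y,z)-\phi(z,x)\bigr].
\]
With this in place I would run a KKM argument on finite-dimensional sections of $K$. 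Coercivity comes from the growth $\phi(y,x)\ge(\|y\|-\|x\|)^2$ in \eqref{phibounds}, which lets me restrict to a bounded set; reflexivity of $E^*$ then gives weak compactness. Monotonicity (A2) is used, as in the Minty step above, to pass from the ``dual'' inequalities $f(y^*,z^*)\le\cdots$, which are closed in $z^*$, back to the primal inequality defining $T_r^fx$. If this route stalls on the nonconvexity, the fallback is to invoke the existence theorem of Takahashi--Zembayashi \cite{TakahashiZembayashi2008} as cited, in its dual formulation on $JC$.
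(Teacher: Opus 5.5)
\textbf{The gap is existence.} Your arguments for uniqueness, for $F(T_r^f)=EP(f)$, for the Minty description of $JEP(f)$ on $K=JC$, and for the $\phi$-inequality are correct. They only use that $z$ is \emph{some} element of $T_r^fx$. The paper does not prove this lemma; it only cites Takahashi--Zembayashi. Existence is half of ``single valued'', and your last paragraph does not establish it. No KKM map is specified. Both the KKM covering step and the Minty-type passage back to the primal inequality use convexity of the \emph{whole} bifunction in $y^*$. As you observed, this convexity fails for $y^*\mapsto\langle J_*y^*-J_*z^*,z^*-Jx\rangle$. Rewriting through $\phi$ does not help, since $\phi(y,x)-\phi(y,z)$ is affine in $y$, not in $Jy$.

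\textbf{Why it cannot be closed.} Existence is false as stated, even with $C$ convex and $E$ uniformly smooth and uniformly convex. Let $E=\mathbb{R}^2$ with the $\ell_p$-norm, $1<p<\infty$, $p\neq2$, and $C=E$, so $JC=E^*$. Put $w=(1,0)$ and $f(x^*,y^*)=\langle w,y^*-x^*\rangle$, which satisfies (A1)--(A4). Then $z\in T_r^fx$ says exactly that $z$ minimizes $G(y)=\langle w,Jy\rangle+\frac1r\langle y,Jz-Jx\rangle$ over $E$. Since $J$ is odd and positively homogeneous, so is $G$. Hence $G$ has a minimizer only if $G\equiv0$, which would make $y\mapsto\langle w,Jy\rangle=(Jy)_1$ linear. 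But $(Jy)_1$ equals $1$, $0$ and $2^{(2-p)/p}$ at $(1,0)$, $(0,1)$ and $(1,1)$. So $T_r^fx=\emptyset$ for every $x$.

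Nonempty $EP(f)$ does not rescue it. Take $p>2$ and $f(x^*,y^*)=|y_1^*|-|x_1^*|$, so $EP(f)=\{y:y_1=0\}$. Now $G(y)=|(Jy)_1|+\frac1r\langle y,Jz-Jx\rangle$ is still positively homogeneous, so a minimizer forces $G\ge0$. Testing $y=(0,\pm1)$ and $y=(\pm s,1)$ with $s\downarrow0$ gives $Jz=Jx$, hence $z=x$ and $x\in EP(f)$. So $T_r^fx=\emptyset$ for every $x\notin EP(f)$.

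\textbf{The citation fallback fails too.} No reference can supply existence for this formulation. What is provable is one of two consistent variants.
\begin{itemize}
\item Dual version: replace the penalty by $\frac1r\langle z-x,Jy-Jz\rangle$, which is affine in $Jy$. Blum--Oettli then applies in $E^*$ on $K=JC$, with coercivity from $\langle J_*z^*-x,w^*-z^*\rangle\le-\|z^*\|^2+O(\|z^*\|)$. Your computations give uniqueness, $F(T_r^f)=EP(f)$ and closed convex $JEP(f)$. The inequality comes out reversed: $\phi(x,z)+\phi(z,p)\le\phi(x,p)$.
\item Primal version: keep the stated penalty and inequality, but take $f:C\times C\to\mathbb{R}$ with $C$ closed and convex and $f(z,\cdot)$ convex. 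Then $\langle y-z,Jz-Jx\rangle$ is affine in the same variable in which $f$ is convex.
\end{itemize}
In a Hilbert space both variants coincide with the statement. Your arguments for the other assertions carry over to either variant with only the obvious changes. As written, however, the lemma's existence claim is false, so your proof cannot be completed.
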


\begin{lemma}[Variational-inequality resolvent \cite{ZegeyeShahzad2014}]\label{VIrezolvent}
Let $C$ be a nonempty closed subset of a smooth, strictly convex and reflexive Banach space $E$ such that $JC$ is closed and convex. Let $A:C\to E^{*}$ be continuous and monotone. For $r>0$ and $x\in E$, define the mapping $F_r^A:E\to C$ by
\[
F_r^A x=\left\{z\in C:\langle y-z,Az\rangle+\frac{1}{r}\langle y-z,Jz-Jx\rangle\geq 0,\ \forall y\in C\right\}.
\]
Then $F_r^A$ is single valued, $F(F_r^A)=VI(C,A)$, $JVI(C,A)$ is closed and convex, and
\[
\phi(p,F_r^A x)+\phi(F_r^A x,x)\leq \phi(p,x),\qquad \forall p\in VI(C,A).
\]
\end{lemma}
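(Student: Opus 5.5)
The plan is to follow the scheme of Lemma~\ref{EPrezolvent} (Takahashi--Zembayashi), working in the dual space. Every object is transported to $JC$ through $x^{*}=Jx$ and $x=J_{*}x^{*}$. Closedness and convexity of $JC$ is the only geometric hypothesis available, so all compactness and convexity arguments have to take place there. Throughout I write $\langle y-z,Az\rangle=\langle J_{*}y^{*}-J_{*}z^{*},AJ_{*}z^{*}\rangle$ with $y^{*}=Jy$ and $z^{*}=Jz$.

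\textbf{Step 1 (existence).} Fix $x\in E$ and $r>0$. Define
\[
h(z^{*},y^{*})=\langle J_{*}y^{*}-J_{*}z^{*},AJ_{*}z^{*}\rangle+\tfrac1r\langle J_{*}y^{*}-J_{*}z^{*},z^{*}-Jx\rangle ,\qquad z^{*},y^{*}\in JC .
\]
I would first pass to the Minty form $\langle J_{*}y^{*}-J_{*}z^{*},AJ_{*}y^{*}+\tfrac1r(y^{*}-Jx)\rangle\ge 0$. Its sets $\{z^{*}\in JC:\cdots\}$ are weakly closed in the reflexive space $E^{*}$. They are also bounded after restricting to a large ball, using coercivity coming from $\tfrac1r\langle J_{*}y^{*}-J_{*}z^{*},z^{*}\rangle\approx-\tfrac1r\|z^{*}\|^{2}$. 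I would then apply the KKM lemma in $JC$ and obtain a common point. Finally, I would return from the Minty form to the original inequality by approaching along $w_t=J_{*}(ty^{*}+(1-t)z^{*})\in C$, which lies in $C$ because $JC$ is convex, letting $t\downarrow0$. This uses continuity of $A$ together with $w_t\to z$.

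\textbf{Main obstacle.} The map $y^{*}\mapsto\langle J_{*}y^{*},a\rangle$ is not convex in general, so the KKM condition and the limit $w_t\to z$ require care. The limit is immediate in norm when $E$ has the Kadec--Klee property, as in the uniformly convex setting of the paper. The KKM condition is the step I expect to be hardest. I would first try to verify it using monotonicity of $A$ and of $J$ (via $\langle J_{*}y^{*}-J_{*}z^{*},y^{*}-z^{*}\rangle\ge0$) rather than convexity.

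\textbf{Step 2 (uniqueness).} Let $z_1,z_2$ both satisfy the defining inequality. Put $y=z_2$ in the inequality for $z_1$, put $y=z_1$ in the inequality for $z_2$, and add. This gives
\[
\langle z_1-z_2,Az_1-Az_2\rangle+\tfrac1r\langle z_1-z_2,Jz_1-Jz_2\rangle\le 0 .
\]
Monotonicity of $A$ then yields $\langle z_1-z_2,Jz_1-Jz_2\rangle\le0$, and strict monotonicity of $J$ (strict convexity of $E$) forces $z_1=z_2$.

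\textbf{Step 3 (fixed points).} The identity $F(F_r^A)=VI(C,A)$ is immediate: $z=x$ kills the $J$-term, and conversely a solution $p$ of the variational inequality satisfies the defining inequality for $x=p$, so uniqueness gives $F_r^Ap=p$.

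\textbf{Step 4 (the $\phi$-inequality).} Let $z=F_r^Ax$ and take $y=p\in VI(C,A)$. The defining inequality together with $\langle z-p,Ap\rangle\ge0$ and monotonicity of $A$ gives $\langle p-z,Jz-Jx\rangle\ge 0$. The three-point identity
\[
\phi(p,x)=\phi(p,z)+\phi(z,x)+2\langle p-z,Jz-Jx\rangle
\]
then yields $\phi(p,F_r^Ax)+\phi(F_r^Ax,x)\le\phi(p,x)$.

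\textbf{Step 5 ($J\,VI(C,A)$ closed and convex).} I would write
\[
J\,VI(C,A)=\bigcap_{y^{*}\in JC}\{p^{*}\in JC:\langle J_{*}y^{*}-J_{*}p^{*},AJ_{*}y^{*}\rangle\ge0\}
\]
via the Minty equivalence from Step 1. Closedness follows from norm-to-weak continuity of $J_{*}$ together with the fact that each intersected set is defined by a norm-continuous functional. For convexity, I would try to reuse the mechanism of Step 4. If $p^{*}$ and $q^{*}$ lie in $J\,VI$ and $u^{*}=tp^{*}+(1-t)q^{*}$, then $u^{*}\in JC$. I would evaluate $F_r^A$ at $u=J_{*}u^{*}$ and use Step 4 for $p$ and $q$, written through $\phi(a,b)=\|Ja\|^{2}-2\langle a,Jb\rangle+\|b\|^{2}$, to force $\phi(F_r^Au,u)=0$, and hence $F_r^Au=u$. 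I am least certain of this last step: the $\phi$-estimates are affine in $p$ rather than in $Jp$, so it may require exactly the dual-convexity argument flagged as the main obstacle.
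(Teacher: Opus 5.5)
The paper gives no proof of this lemma beyond the citation. Your Steps 2--4 are correct and standard: uniqueness from strict monotonicity of $J$, the identity $F(F_r^A)=VI(C,A)$, and the $\phi$-inequality from the three-point identity. The genuine gap is existence in Step 1, and under the stated hypotheses it cannot be closed.

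\textbf{Step 1 (existence).} Working in $JC$ fails in two places. First, the KKM condition would need convexity of $y^{*}\mapsto\langle J_{*}y^{*}-J_{*}z^{*},b\rangle$, which does not hold. Second, the return from the Minty form fails for a reason other than the one you flag. The convergence $w_t\to z$ is harmless. The problem is that after substituting $w_t$ you must divide by $t$, and $(w_t-z)/t$ does not tend to $y-z$; at best it tends to a derivative of $J_{*}$ applied to $Jy-Jz$.

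More decisively, existence can fail when only $JC$ is convex. Take $A=0$ and suppose $z\in F_r^0x$. The defining inequality is affine in $y$, so it holds for all $y\in\mathrm{co}\,C$. If $x\in\mathrm{co}\,C$, then $y=x$ gives $\langle z-x,Jz-Jx\rangle\le0$, hence $z=x$. So $F_r^0x=\emptyset$ for every $x\in\mathrm{co}\,C\setminus C$.

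Such points exist. In $E=\ell_p^2$ with $p\neq2$ and $1/p+1/q=1$, let $C=J_{*}([e_1,e_2])$, so that $JC$ is a segment. Then $e_1,e_2\in C$. By the symmetry $(a,b)\mapsto(b,a)$, the only point of $C$ on the diagonal is $J_{*}(\frac12,\frac12)$. Its coordinates sum to $2^{2/q-1}\neq1$, so $(\frac12,\frac12)\notin C$.

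The missing hypothesis is convexity of $C$ itself, which Theorem~\ref{mainthm} assumes. Under it, run KKM in $C$ rather than in $JC$:
\begin{itemize}
\item $f(z,y)=\langle y-z,Az\rangle$ satisfies (A1)--(A4) on $C\times C$, and the term $\frac1r\langle y-z,Jz-Jx\rangle$ is affine in $y$;
\item the Minty-to-Stampacchia step uses $w_t=ty+(1-t)z$, for which $(w_t-z)/t=y-z$ exactly.
\end{itemize}

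\textbf{Step 5 (convexity).} Your doubt is justified: $J\,VI(C,A)$ need not be convex, even when $C$ is convex. In $E=\ell_p^2$ with $p\neq2$, take $C=E$ and $Ax=(x_1+x_2-1)(1,1)$. This map is continuous and monotone, since $\langle x-y,Ax-Ay\rangle=(x_1+x_2-y_1-y_2)^2$. Then $VI(E,A)=\{x:x_1+x_2=1\}$. Its image under $J$ contains $Je_1=e_1$ and $Je_2=e_2$ but not $(\frac12,\frac12)$. By symmetry the only candidate preimage is $(\frac12,\frac12)$, and the coordinates of $J(\frac12,\frac12)$ sum to $2^{2/p-1}\neq1$.

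What your mechanism does prove is convexity of $VI(C,A)$ in $E$. Put $m=tp+(1-t)q$ and $w=F_r^Am$. Add $t$ times and $(1-t)$ times the Step~4 inequalities for $p$ and $q$, and expand $\phi$; this gives
\[
\phi(m,w)+\phi(w,m)\le0,
\]
so $w=m\in VI(C,A)$. Your closedness argument is fine.

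With $C$ assumed convex, and convexity claimed for $VI(C,A)$ rather than $J\,VI(C,A)$, your outline becomes the standard proof.
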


\begin{lemma}[Generalized resolvent \cite{KamimuraKohsakaTakahashi2004,KohsakaTakahashi2004,AoyamaKimuraTakahashi2008}]\label{MMresolvent}
Let $E$ be a uniformly smooth and uniformly convex real Banach space, and let
$J:E\to E^*$ denote the normalized duality mapping. Let
$M:E\to 2^{E^{*}}$ be maximal monotone with $M^{-1}0\neq\emptyset$. For
$r>0$, define
\[
G_r^M=(J+rM)^{-1}J.
\]
Then $G_r^M:E\to D(M)$ is single valued,
\[
F(G_r^M)=M^{-1}0,
\]
and
\[
\phi(p,G_r^Mx)+\phi(G_r^Mx,x)\leq \phi(p,x),
\qquad x\in E,\quad p\in M^{-1}0.
\]
\end{lemma}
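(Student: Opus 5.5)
The statement to prove is Lemma~\ref{MMresolvent}: for $r>0$, $G_r^M=(J+rM)^{-1}J$ is single valued, $F(G_r^M)=M^{-1}0$, and
\[
\phi(p,G_r^Mx)+\phi(G_r^Mx,x)\leq\phi(p,x)\qquad (p\in M^{-1}0).
\]
I will treat the three assertions in turn, using only the geometry of $E$ and the monotonicity of $M$.

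\textbf{Single-valuedness.} I will invoke Rockafellar's surjectivity theorem. Since $E$ is reflexive and strictly convex with strictly convex dual (uniform convexity and uniform smoothness give both), a monotone $M$ is maximal if and only if $R(J+rM)=E^{*}$ for every $r>0$. Hence for each $x\in E$ there is $q\in E$ with $Jx\in Jq+rMq$, so $G_r^Mx$ is nonempty and $G_r^M$ maps $E$ into $D(M)$.

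For uniqueness, suppose $q_1,q_2$ both satisfy the inclusion. Then $(Jx-Jq_i)/r\in Mq_i$ for $i=1,2$. Monotonicity of $M$ gives
\[
\langle q_1-q_2,(Jx-Jq_1)-(Jx-Jq_2)\rangle\geq0,
\]
that is, $\langle q_1-q_2,Jq_1-Jq_2\rangle\leq0$. Because $J$ is monotone, this quantity is in fact zero. Strict convexity of $E$ makes $J$ strictly monotone, so $q_1=q_2$.

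\textbf{Fixed points.} If $q=x$, then \eqref{resolventidentity} reads $0\in rMx$, so $x\in M^{-1}0$. Conversely, if $0\in Mp$, then $Jp\in Jp+rMp$, so $p$ solves the defining inclusion; by uniqueness $G_r^Mp=p$. Hence $F(G_r^M)=M^{-1}0$.

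\textbf{The $\phi$-inequality.} Let $q=G_r^Mx$ and $p\in M^{-1}0$. From \eqref{phi} one has the three-point identity
\[
\phi(p,x)=\phi(p,q)+\phi(q,x)+2\langle p-q,Jq-Jx\rangle .
\]
It therefore suffices to show $\langle p-q,Jq-Jx\rangle\geq0$. Apply monotonicity of $M$ to the pairs $(p,0)$ and $(q,(Jx-Jq)/r)$:
\[
\Bigl\langle p-q,\,0-\frac{Jx-Jq}{r}\Bigr\rangle\geq0 .
\]
Multiplying by $r>0$ gives exactly $\langle p-q,Jq-Jx\rangle\geq0$, which yields the inequality.

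\textbf{Main obstacle.} The only nontrivial ingredient is the surjectivity $R(J+rM)=E^{*}$. That is Rockafellar's theorem, which I will cite rather than reprove; it needs reflexivity together with the strict convexity of $E$ and $E^{*}$ supplied by the hypotheses. Everything else is algebraic.
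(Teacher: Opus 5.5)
Your proof is correct, and it is the standard argument. The paper does not prove this lemma itself; it cites it from Kamimura--Kohsaka--Takahashi, Kohsaka--Takahashi and Aoyama--Kimura--Takahashi, and those sources proceed exactly as you do: Rockafellar's surjectivity theorem $R(J+rM)=E^{*}$ for existence, strict monotonicity of $J$ for uniqueness, and the identity $\phi(p,x)=\phi(p,q)+\phi(q,x)+2\langle p-q,Jq-Jx\rangle$ combined with monotonicity of $M$ at the pairs $(p,0)$ and $(q,(Jx-Jq)/r)$ for the inequality.
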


\begin{lemma}[Closedness of maximal monotone graphs \cite{Rockafellar1970,AlberRyazantseva2006}]\label{closedgraphM}
Let $M:E\to 2^{E^{*}}$ be maximal monotone. If
\[
x_n\to x,\qquad x_n^{*}\to x^{*},\qquad x_n^{*}\in Mx_n\quad(n\geq1),
\]
then $x^{*}\in Mx$.
\end{lemma}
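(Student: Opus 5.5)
The statement to prove is Lemma~\ref{closedgraphM}: the graph of a maximal monotone operator $M$ is closed in the strong topology of $E\times E^{*}$. I would argue directly from maximality, checking that the limit pair $(x,x^{*})$ is monotonically related to every point of $G(M)$.

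\textbf{Step 1.} Fix an arbitrary $(y,y^{*})\in G(M)$. Since $x_n^{*}\in Mx_n$, monotonicity of $M$ gives
\[
\langle x_n-y,\,x_n^{*}-y^{*}\rangle\geq 0,\qquad n\geq 1.
\]

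\textbf{Step 2.} I pass to the limit $n\to\infty$. The duality pairing $E\times E^{*}\to\mathbb{R}$ is jointly continuous for the norm topologies, by the estimate
\[
|\langle u_n,u_n^{*}\rangle-\langle u,u^{*}\rangle|\leq \|u_n-u\|\,\|u_n^{*}\|+\|u\|\,\|u_n^{*}-u^{*}\|.
\]
The sequence $\{\|x_n^{*}\|\}$ is bounded because it converges. Taking $u_n=x_n-y$ and $u_n^{*}=x_n^{*}-y^{*}$, this yields
\[
\langle x-y,\,x^{*}-y^{*}\rangle\geq 0\qquad\text{for every }(y,y^{*})\in G(M).
\]

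\textbf{Step 3.} I conclude by maximality. Suppose, for contradiction, that $x^{*}\notin Mx$. Let $\widetilde M$ be the operator whose graph is $G(M)\cup\{(x,x^{*})\}$. By Step 2, $\widetilde M$ is monotone, and its graph properly contains $G(M)$. This contradicts the maximality of $M$, so $x^{*}\in Mx$.

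\textbf{Main obstacle.} There is essentially none; the only point requiring care is the joint continuity of the pairing in Step 2. Both convergences are strong, so the product estimate above suffices. With only weak convergence of one factor this step would fail, and the hypothesis as stated avoids that issue.
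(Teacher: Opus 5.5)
Your proof is correct and complete. Monotonicity gives $\langle x_n-y,x_n^{*}-y^{*}\rangle\geq 0$ for every $(y,y^{*})\in G(M)$, your norm estimate passes this to the limit, and maximality then forces $(x,x^{*})\in G(M)$. The paper gives no proof of its own here; the lemma is simply imported from Rockafellar and Alber--Ryazantseva, and your argument is the standard one behind those citations. Writing it out makes visible that only monotonicity and maximality are used (no reflexivity, smoothness or uniform convexity of $E$), which matches the lemma being stated for an arbitrary $E$.

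One correction to your closing remark: Step 2 does not fail when just one factor converges weakly. Suppose $x_n\to x$ in norm and $x_n^{*}\to x^{*}$ only weak$^{*}$, or $x_n\rightharpoonup x$ weakly and $x_n^{*}\to x^{*}$ in norm. The weakly convergent sequence is bounded by the uniform boundedness principle, so the term pairing it with the norm-null difference still tends to $0$. The remaining term, kept as a pairing rather than bounded by norms, tends to $0$ by weak (weak$^{*}$) convergence. The argument genuinely breaks down only when both sequences converge merely weakly. So the same proof shows $G(M)$ is sequentially closed for the strong$\times$weak$^{*}$ and weak$\times$strong topologies. That is more than the paper needs, since in Step~4 of Theorem~\ref{mainthm} both $q_{s_l}$ and $\eta_l$ converge in norm.
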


\begin{remark}
In the sequel, the resolvents $T_r^{f_i}$, $F_r^{A_j}$ and $G_r^{M_k}$ will be used repeatedly. To simplify notation, we shall write
\[
T_{r_n}^{\sigma(n)}:=T_{r_n}^{f_{\sigma(n)}},\qquad
F_{r_n}^{\tau(n)}:=F_{r_n}^{A_{\tau(n)}},\qquad
G_{r_n}^{\rho(n)}:=G_{r_n}^{M_{\rho(n)}}.
\]
\end{remark}

\section{Main results}

Let $E$ be a uniformly smooth and uniformly convex real Banach space with dual space $E^{*}$, and let $C$ be a nonempty closed and convex subset of $E$ such that $JC$ is closed and convex. Let $\{f_i\}_{i=1}^{\infty}$ be a countable family of bifunctions from $JC\times JC$ into $\mathbb{R}$ satisfying $(A1)-(A4)$. Let $\{A_j\}_{j=1}^{\infty}$ be a countable family of continuous monotone mappings from $C$ into $E^{*}$. Let $\{M_k\}_{k=1}^{\infty}$ be a countable family of maximal monotone operators from $E$ into $2^{E^{*}}$ such that $G_r^{M_k}(C)\subset C$ for each $r>0$ and each $k\in\mathbb{N}$. Let $T_n:C\to E^{*}$, $n\geq1$, be a countable family of generalized $J_{*}$-nonexpansive maps and let $\Gamma$ be a family of $J_{*}$-closed and generalized $J_{*}$-nonexpansive maps from $C$ into $E^{*}$ such that
\[
\bigcap_{n=1}^{\infty}F_J(T_n)=F_J(\Gamma)\neq\emptyset.
\]

Let $\sigma,\tau,\rho:\mathbb{N}\to\mathbb{N}$ be index maps such that for every $m\in\mathbb{N}$, the sets
\[
\{n\in\mathbb{N}:\sigma(n)=m\},\quad
\{n\in\mathbb{N}:\tau(n)=m\},\quad
\{n\in\mathbb{N}:\rho(n)=m\}
\]
are infinite. A typical choice is the repeated triangular ordering
\[
1,1,2,1,2,3,1,2,3,4,\ldots.
\]
Equivalently, one may define an index map $\eta:\mathbb N\to\mathbb N$ by
\[
\eta\left(\frac{m(m-1)}{2}+r\right)=r,
\qquad m\in\mathbb N,\quad r=1,2,\ldots,m.
\]
Then each positive integer occurs infinitely many times in $\{\eta(n)\}$. In applications one may take $\sigma=\tau=\rho=\eta$, or use three maps of this type.

Define
\begin{equation}\label{Bset}
B:=F_J(\Gamma)\cap\left(\bigcap_{i=1}^{\infty}EP(f_i)\right)
\cap\left(\bigcap_{j=1}^{\infty}VI(C,A_j)\right)
\cap\left(\bigcap_{k=1}^{\infty}M_k^{-1}0\right).
\end{equation}

Let the sequence $\{x_n\}$ be generated by the following algorithm:
\begin{equation}\label{alg}
\left\{
\begin{array}{ll}
x_1=x\in C,\quad C_1=C,\\[0.1cm]
z_n=T_{r_n}^{\sigma(n)}x_n,\\[0.1cm]
u_n=F_{r_n}^{\tau(n)}x_n,\\[0.1cm]
q_n=G_{r_n}^{\rho(n)}x_n,\\[0.1cm]
y_n=J^{-1}\left(\alpha_1Jx_n+\alpha_2Jz_n+\alpha_3Ju_n+\alpha_4Jq_n+\alpha_5T_nq_n\right),\\[0.1cm]
C_{n+1}=\{v\in C_n:\phi(v,y_n)\leq \phi(v,x_n)\},\\[0.1cm]
x_{n+1}=\Pi_{C_{n+1}}x,
\end{array}
\right.
\end{equation}
for all $n\in\mathbb{N}$, where $\alpha_i\in(0,1)$, $i=1,2,3,4,5$, satisfy $\sum_{i=1}^{5}\alpha_i=1$, and $\{r_n\}\subset[a,\infty)$ for some $a>0$.

\begin{lemma}\label{welldefined}
Assume that $B\neq\emptyset$. Then the sets $C_n$ generated by \eqref{alg} are nonempty, closed and convex, and the sequence $\{x_n\}$ is well defined.
\end{lemma}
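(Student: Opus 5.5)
Proceed by induction on $n$, proving simultaneously that $C_n$ is closed and convex and that $B\subset C_n$. Nonemptiness then follows from $B\neq\emptyset$. Since $E$ is uniformly convex and uniformly smooth, hence reflexive, smooth and strictly convex, the generalized projection $\Pi_{C_{n+1}}$ is well defined and single valued on nonempty closed convex sets, so $x_{n+1}$ exists. The resolvents in \eqref{alg} are single valued by Lemmas \ref{EPrezolvent}, \ref{VIrezolvent} and \ref{MMresolvent}, so $z_n,u_n,q_n,y_n$ are well defined once $x_n\in C$. For $q_n$ this uses $G_r^{M_k}(C)\subset C$, so that $T_n q_n$ makes sense. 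Note that $x_n\in C_n\subset C$ for every $n$. Lemma \ref{MMresolvent} requires $M_k^{-1}0\neq\emptyset$, and this is guaranteed by $B\neq\emptyset$.

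\emph{Closedness and convexity.} The case $n=1$ holds because $C_1=C$. Assume $C_n$ is closed and convex. Expanding \eqref{phi}, the condition $\phi(v,y_n)\le\phi(v,x_n)$ is equivalent to
\[
2\langle v,Jx_n-Jy_n\rangle\le \|x_n\|^2-\|y_n\|^2 .
\]
The $\|v\|^2$ terms cancel, so this is a closed half-space (or all of $E$) in $v$. Therefore $C_{n+1}$ is the intersection of $C_n$ with a closed convex set, and is itself closed and convex.

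\emph{Inclusion $B\subset C_{n+1}$.} Let $p\in B\subset C_n$. This is the main estimate. Using $\|J^{-1}w^*\|=\|w^*\|$ and $\langle p,J J^{-1}w^*\rangle=\langle p,w^*\rangle$, we have
\[
\phi(p,y_n)=\|p\|^2-2\sum\alpha_i\langle p,w_i^*\rangle+\Big\|\sum\alpha_iw_i^*\Big\|^2 ,
\]
where $w_i^*\in\{Jx_n,Jz_n,Ju_n,Jq_n,T_nq_n\}$. By convexity of $\|\cdot\|^2$ on $E^*$ (no uniform convexity is needed here), this gives
\[
\phi(p,y_n)\le \alpha_1\phi(p,x_n)+\alpha_2\phi(p,z_n)+\alpha_3\phi(p,u_n)+\alpha_4\phi(p,q_n)+\alpha_5\phi(p,J_*T_nq_n).
\]
In the last term we write $\|T_nq_n\|^2=\|J_*T_nq_n\|^2$ and $T_nq_n=J(J_*T_nq_n)$.

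We then bound each term by $\phi(p,x_n)$:
\begin{itemize}
\item Lemma \ref{EPrezolvent} gives $\phi(p,z_n)\le\phi(p,x_n)$, since $p\in EP(f_{\sigma(n)})$.
\item Lemma \ref{VIrezolvent} gives $\phi(p,u_n)\le\phi(p,x_n)$, since $p\in VI(C,A_{\tau(n)})$.
\item Lemma \ref{MMresolvent} gives $\phi(p,q_n)\le\phi(p,x_n)$, since $p\in M_{\rho(n)}^{-1}0$.
\item The generalized $J_*$-nonexpansiveness of $T_n$, with $p\in F_J(\Gamma)=\bigcap_m F_J(T_m)$, gives $\phi(p,J_*T_nq_n)\le\phi(p,q_n)\le\phi(p,x_n)$.
\end{itemize}
Since $\sum\alpha_i=1$, it follows that $\phi(p,y_n)\le\phi(p,x_n)$, so $p\in C_{n+1}$.

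The only delicate point is the convexity estimate for $\phi(p,y_n)$, which must be carried out in $E^*$ with the identification above. Everything else is a direct application of the resolvent lemmas.
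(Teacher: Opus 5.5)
Your proposal is correct and follows essentially the same route as the paper: an induction showing $B\subset C_n$ via the resolvent inequalities, the generalized $J_{*}$-nonexpansiveness of $T_n$ and convexity of $\|\cdot\|^2$ in $E^{*}$, together with the half-space description of $C_{n+1}$. You are right that Lemma~\ref{glemma} is not needed at this stage. Your checks that $q_n\in C$, so that $T_nq_n$ is defined, and that $M_k^{-1}0\neq\emptyset$ make explicit points the paper leaves implicit.
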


\begin{proof}
We prove by induction that $B\subset C_n$ and that $C_n$ is closed and convex for every $n\geq1$. The assertion is true for $n=1$ because $C_1=C$. Suppose it is true for some $n\geq1$ and fix $p\in B$. Then
\[
p\in EP(f_{\sigma(n)}),\qquad p\in VI(C,A_{\tau(n)}),\qquad
p\in M_{\rho(n)}^{-1}0,
\]
and, since $\bigcap_{m=1}^{\infty}F_J(T_m)=F_J(\Gamma)$, also $p\in F_J(T_n)$. Lemmas \ref{EPrezolvent}, \ref{VIrezolvent} and \ref{MMresolvent} give
\[
\phi(p,z_n)\leq\phi(p,x_n),\qquad
\phi(p,u_n)\leq\phi(p,x_n),\qquad
\phi(p,q_n)\leq\phi(p,x_n).
\]
Since $T_n$ is generalized $J_{*}$-nonexpansive and $p\in F_J(T_n)$,
\[
\phi(p,J^{-1}T_nq_n)\leq \phi(p,q_n)\leq \phi(p,x_n).
\]
Using the convexity of the norm squared in $E^{*}$, Lemma \ref{glemma}, and the identity
\[
Jy_n=\alpha_1Jx_n+\alpha_2Jz_n+\alpha_3Ju_n+\alpha_4Jq_n+\alpha_5T_nq_n,
\]
we obtain
\[
\phi(p,y_n)\leq
\alpha_1\phi(p,x_n)+\alpha_2\phi(p,z_n)+\alpha_3\phi(p,u_n)
+\alpha_4\phi(p,q_n)+\alpha_5\phi(p,J^{-1}T_nq_n)
\leq \phi(p,x_n).
\]
Thus $p\in C_{n+1}$, and so $B\subset C_{n+1}$. In particular, $C_{n+1}$ is nonempty.

It remains to check convexity and closedness. From the definition of $C_{n+1}$ and \eqref{phi}, the inequality $\phi(v,y_n)\leq\phi(v,x_n)$ is equivalent to
\[
2\langle v,Jx_n-Jy_n\rangle\leq \|x_n\|^2-\|y_n\|^2.
\]
Hence
\[
C_{n+1}=C_n\cap
\left\{v\in E:2\langle v,Jx_n-Jy_n\rangle\leq\|x_n\|^2-\|y_n\|^2\right\},
\]
which is the intersection of $C_n$ and a closed half-space. Therefore $C_{n+1}$ is closed and convex. By induction, all $C_n$ are nonempty, closed and convex. Since $E$ is smooth, strictly convex and reflexive, the generalized projection $\Pi_{C_n}x$ exists and is unique for every $n$. Thus \eqref{alg} is well defined.
\end{proof}

\begin{theorem}\label{mainthm}
Let $E$ be a uniformly smooth and uniformly convex real Banach space with dual space $E^{*}$, and let $C$ be a nonempty closed and convex subset of $E$ such that $JC$ is closed and convex. Let $\{f_i\}_{i=1}^{\infty}$ be bifunctions from $JC\times JC$ into $\mathbb{R}$ satisfying $(A1)$--$(A4)$, let $\{A_j\}_{j=1}^{\infty}$ be continuous monotone mappings from $C$ into $E^{*}$, and let $\{M_k\}_{k=1}^{\infty}$ be maximal monotone operators from $E$ into $2^{E^{*}}$ such that $G_r^{M_k}(C)\subset C$ for all $r>0$ and $k\in\mathbb{N}$. Let $\{T_n\}_{n=1}^{\infty}$ be generalized $J_{*}$-nonexpansive mappings from $C$ into $E^{*}$, and let $\Gamma$ be a family of $J_{*}$-closed generalized $J_{*}$-nonexpansive mappings from $C$ into $E^{*}$ such that
\[
\bigcap_{n=1}^{\infty}F_J(T_n)=F_J(\Gamma)\neq\emptyset.
\]
Let $\sigma,\tau,\rho:\mathbb N\to\mathbb N$ be index maps for which every positive integer occurs infinitely many times in each of the sequences $\{\sigma(n)\}$, $\{\tau(n)\}$ and $\{\rho(n)\}$. Assume $B$ defined by \eqref{Bset} is nonempty, closed and convex. Let $\{r_n\}\subset [a,\infty)$ for some $a>0$, and let $\alpha_i\in(0,1)$, $i=1,2,3,4,5$, satisfy $\sum_{i=1}^{5}\alpha_i=1$. Suppose that $\{x_n\}$ is generated by \eqref{alg} and that $\{T_n\}$ satisfies the NST-condition with $\Gamma$, namely, for every bounded sequence $\{s_n\}\subset C$,
\[
\|Js_n-T_ns_n\|\to0\quad \Longrightarrow\quad \|Js_n-Ts_n\|\to0,
\qquad \forall T\in\Gamma.
\]
Then $\{x_n\}$ converges strongly to $\Pi_Bx$, the generalized projection of the initial point $x$ onto $B$.
\end{theorem}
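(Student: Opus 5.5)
The argument will follow the standard shrinking-projection template, organized in four stages. By Lemma \ref{welldefined}, every $C_n$ is nonempty, closed and convex and contains $B$. The proof then proceeds: (i) $\{x_n\}$ is Cauchy; (ii) the differences between the iterate and the resolvent/map outputs vanish; (iii) the limit lies in every component solution set; (iv) the limit equals $\Pi_Bx$.

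For (i), since $x_n=\Pi_{C_n}x$ and $x_{n+1}\in C_{n+1}\subset C_n$, Lemma \ref{projectionlemma} gives
\[
\phi(x_{n+1},x_n)+\phi(x_n,x)\le\phi(x_{n+1},x),
\]
so $\{\phi(x_n,x)\}$ is nondecreasing. For $p\in B\subset C_n$ we also have $\phi(x_n,x)\le\phi(p,x)$, so it is bounded. Hence $\phi(x_n,x)$ converges, $\{x_n\}$ is bounded by \eqref{phibounds}, and for $m>n$ we get $\phi(x_m,x_n)\le\phi(x_m,x)-\phi(x_n,x)\to0$. Lemma \ref{phitozero} then makes $\{x_n\}$ Cauchy, so $x_n\to x^*\in C$. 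Moreover $x_{n+1}\in C_{n+1}$ gives $\phi(x_{n+1},y_n)\le\phi(x_{n+1},x_n)\to0$, hence $\|x_{n+1}-y_n\|\to0$, so $y_n\to x^*$. By uniform continuity of $J$ on bounded sets, $\|Jx_n-Jy_n\|\to0$.

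For (ii), fix $p\in B$. The resolvent inequalities of Lemmas \ref{EPrezolvent}, \ref{VIrezolvent} and \ref{MMresolvent} give
\[
\phi(z_n,x_n)\le\phi(p,x_n)-\phi(p,z_n),
\]
and similarly for $u_n$ and $q_n$. The key estimate comes from redoing the convexity bound in Lemma \ref{welldefined} with Lemma \ref{glemma} applied in $E^*$ (which is uniformly convex since $E$ is uniformly smooth), keeping the penalty term:
\[
\phi(p,y_n)\le \alpha_1\phi(p,x_n)+\alpha_2\phi(p,z_n)+\alpha_3\phi(p,u_n)+\alpha_4\phi(p,q_n)+\alpha_5\phi(p,J^{-1}T_nq_n)-\alpha_1\alpha_5 g(\|Jx_n-T_nq_n\|).
\]
Each middle term is at most $\phi(p,x_n)$, and $\phi(p,x_n)-\phi(p,y_n)\to0$ because this difference equals $\|x_n\|^2-\|y_n\|^2-2\langle p,Jx_n-Jy_n\rangle\to0$. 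Therefore $g(\|Jx_n-T_nq_n\|)\to0$, so $\|Jx_n-T_nq_n\|\to0$. Dropping instead the $z_n$ term gives
\[
\alpha_2\bigl(\phi(p,x_n)-\phi(p,z_n)\bigr)\le\phi(p,x_n)-\phi(p,y_n)\to0,
\]
so $\phi(z_n,x_n)\to0$, and likewise $\phi(u_n,x_n)\to0$ and $\phi(q_n,x_n)\to0$. (Boundedness of $z_n,u_n,q_n,T_nq_n$ follows from $\phi(p,\cdot)\le\phi(p,x_n)$.) Hence $z_n,u_n,q_n\to x^*$ and $\|Jx_n-Jz_n\|,\|Jx_n-Ju_n\|,\|Jx_n-Jq_n\|\to0$. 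Combining these, $\|Jq_n-T_nq_n\|\to0$ with $\{q_n\}\subset C$ bounded (using $G_r^{M_k}(C)\subset C$). The NST-condition then gives $\|Jq_n-Tq_n\|\to0$ for every $T\in\Gamma$, so $J_*Tq_n\to x^*$ by uniform continuity of $J_*$, and $J_*$-closedness yields $Tx^*=Jx^*$, i.e.\ $x^*\in F_J(\Gamma)$.

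Stage (iii), membership of $x^*$ in each countable family, is the main obstacle, because each index is visited only along a subsequence. Fix $k$ and take the infinite set $N_k=\{n:\rho(n)=k\}$. For $n\in N_k$, \eqref{resolventidentity} gives $(Jx_n-Jq_n)/r_n\in M_kq_n$, and this tends to $0$ since $r_n\ge a$. Lemma \ref{closedgraphM} then gives $0\in M_kx^*$. For $EP(f_i)$, along $n$ with $\sigma(n)=i$, I will use (A2) to obtain
\[
\tfrac1{r_n}\langle y-z_n,Jz_n-Jx_n\rangle\ge f_i(Jy,Jz_n),
\]
and then let $n\to\infty$ using lower semicontinuity (A4) to get $f_i(Jy,Jx^*)\le0$ for all $y\in C$. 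The standard Minty argument with $y_t=J_*(tJy+(1-t)Jx^*)$ (convexity of $JC$), together with (A1), (A4) and (A3), then yields $f_i(Jx^*,Jy)\ge0$. For $VI(C,A_j)$, along $\tau(n)=j$, I use
\[
\langle y-u_n,Au_n\rangle\ge-\tfrac1{r_n}\langle y-u_n,Ju_n-Jx_n\rangle\to0
\]
together with continuity of $A_j$, which passes to the limit $\langle y-x^*,A_jx^*\rangle\ge0$ directly, without needing a Minty step. Thus $x^*\in B$.

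For (iv), set $w=\Pi_Bx$. From $\phi(x_n,x)\le\phi(w,x)$ and lower semicontinuity of the norm, $\phi(x^*,x)\le\phi(w,x)$. Since $x^*\in B$ and the generalized projection onto the closed convex set $B$ is unique, $x^*=w$, which completes the proof.
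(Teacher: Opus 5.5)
Your proposal is correct and follows the paper's proof essentially step by step: the shrinking-projection Cauchy argument, vanishing residuals via the uniform-convexity inequality in $E^{*}$, the NST-condition plus $J_{*}$-closedness, subsequence selection through the index maps (the Minty argument for $EP(f_i)$, a direct limit for $VI(C,A_j)$, graph-closedness for $M_k$), and finally uniqueness of $\Pi_Bx$. The one minor variation is that you get $\phi(z_n,x_n),\phi(u_n,x_n),\phi(q_n,x_n)\to0$ from the $\phi(Tx,x)$ terms of the resolvent inequalities and use the $g$-penalty only for the $T_nq_n$ term, whereas the paper applies the $g$-penalty to each pair $(Jx_n,w_{\ell,n})$, $\ell=2,\dots,5$; both work, and your remark that $q_n\in C$ because $G_r^{M_k}(C)\subset C$ makes explicit a point the paper leaves implicit.
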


\begin{proof}
By Lemma \ref{welldefined}, the sequence is well defined and $B\subset C_n$ for every $n\geq1$. The proof is divided into five steps.

\noindent {\bf Step 1: Boundedness and Cauchy property.}
Fix $p\in B$. Since $x_n=\Pi_{C_n}x$ and $p\in B\subset C_n$, Lemma \ref{projectionlemma} gives
\[
\phi(x_n,x)\leq \phi(p,x),\qquad n\geq1.
\]
Therefore $\{x_n\}$ is bounded. Also, because $x_{n+1}\in C_{n+1}\subset C_n$ and $x_n=\Pi_{C_n}x$, we have
\[
\phi(x_n,x)\leq \phi(x_{n+1},x),\qquad n\geq1.
\]
Thus $\{\phi(x_n,x)\}$ is increasing and bounded, so it has a finite limit.

Let $m>n$. Since $x_m\in C_m\subset C_n$, Lemma \ref{projectionlemma} yields
\begin{equation}\label{cauchyphi-revised}
\phi(x_m,x_n)+\phi(x_n,x)\leq \phi(x_m,x).
\end{equation}
Letting $m,n\to\infty$ in \eqref{cauchyphi-revised}, we obtain $\phi(x_m,x_n)\to0$. By Lemma \ref{phitozero}, $\|x_m-x_n\|\to0$. Hence $\{x_n\}$ is Cauchy. Since $C$ is closed, there exists $x^{*}\in C$ such that
\begin{equation}\label{xconv-revised}
x_n\to x^{*}.
\end{equation}

\noindent {\bf Step 2: Convergence of the auxiliary sequences.}
Since $x_{n+1}\in C_{n+1}$, the definition of $C_{n+1}$ gives
\[
\phi(x_{n+1},y_n)\leq \phi(x_{n+1},x_n).
\]
By \eqref{cauchyphi-revised}, with $m=n+1$, $\phi(x_{n+1},x_n)\to0$. Hence $\phi(x_{n+1},y_n)\to0$, and Lemma \ref{phitozero} implies
\begin{equation}\label{ynconv-revised}
y_n-x_{n+1}\to0.
\end{equation}
Together with \eqref{xconv-revised}, this gives
\begin{equation}\label{ynx-revised}
y_n\to x^{*}.
\end{equation}

The sequences $\{z_n\}$, $\{u_n\}$, $\{q_n\}$ and $\{J^{-1}T_nq_n\}$ are bounded. Indeed, the resolvent inequalities and the generalized $J_{*}$-nonexpansiveness of $T_n$ give, for $p\in B$,
\[
\phi(p,z_n),\ \phi(p,u_n),\ \phi(p,q_n),\ \phi(p,J^{-1}T_nq_n)\leq \phi(p,x_n),
\]
and the right-hand side is bounded.

Since $E$ is uniformly smooth, $E^{*}$ is uniformly convex. Choose $R>0$ such that
\[
Jx_n,\quad Jz_n,\quad Ju_n,\quad Jq_n,\quad T_nq_n\in B_R(0)\subset E^{*},
\qquad n\geq1.
\]
Set
\[
w_{1,n}=Jx_n,\quad w_{2,n}=Jz_n,\quad w_{3,n}=Ju_n,\quad
w_{4,n}=Jq_n,\quad w_{5,n}=T_nq_n.
\]
Applying Lemma \ref{glemma} in the space $E^{*}$ with this fixed radius $R$, we obtain a continuous, strictly increasing and convex function $g$ with $g(0)=0$. Using the estimate from Lemma \ref{glemma} with the pair $(w_{1,n},w_{\ell,n})$, $\ell=2,3,4,5$, and then using the resolvent inequalities together with the generalized $J_{*}$-nonexpansiveness of $T_n$, we obtain, for each $\ell=2,3,4,5$,
\begin{equation}\label{gterms-revised}
\alpha_1\alpha_{\ell} g(\|w_{1,n}-w_{\ell,n}\|)
\leq \phi(p,x_n)-\phi(p,y_n).
\end{equation}
Inequality \eqref{gterms-revised} follows from
\[
\phi(p,y_n)
\leq \sum_{i=1}^{5}\alpha_i\phi(p,J^{-1}w_{i,n})
      -\alpha_1\alpha_{\ell}g(\|w_{1,n}-w_{\ell,n}\|)
\leq \phi(p,x_n)-\alpha_1\alpha_{\ell}g(\|w_{1,n}-w_{\ell,n}\|),
\]
where $J^{-1}w_{5,n}=J^{-1}T_nq_n$.
By \eqref{xconv-revised} and \eqref{ynx-revised}, the right-hand side of \eqref{gterms-revised} tends to zero. Since $g$ is strictly increasing and $g(0)=0$, it follows that
\[
\|Jx_n-Jz_n\|\to0,\quad \|Jx_n-Ju_n\|\to0,\quad
\|Jx_n-Jq_n\|\to0,\quad \|Jx_n-T_nq_n\|\to0.
\]
The inverse duality map $J^{-1}=J_{*}$ is uniformly continuous on bounded subsets of $E^{*}$. Therefore,
\begin{equation}\label{zuqconv-revised}
z_n\to x^{*},\qquad u_n\to x^{*},\qquad q_n\to x^{*}.
\end{equation}
Moreover,
\begin{equation}\label{Treg-revised}
\|Jq_n-T_nq_n\|\leq \|Jq_n-Jx_n\|+\|Jx_n-T_nq_n\|\to0.
\end{equation}

\noindent {\bf Step 3: The limit belongs to the common $J$-fixed point set.}
From \eqref{zuqconv-revised} and \eqref{Treg-revised}, the bounded sequence $\{q_n\}$ satisfies $\|Jq_n-T_nq_n\|\to0$. By the NST-condition with $\Gamma$,
\[
\|Jq_n-Tq_n\|\to0,\qquad \forall T\in\Gamma.
\]
Fix $T\in\Gamma$. Since $q_n\to x^{*}$ and $Jq_n\to Jx^{*}$, we get $Tq_n\to Jx^{*}$ and hence $(J_{*}\circ T)q_n\to x^{*}$. The $J_{*}$-closedness of $T$ gives $(J_{*}\circ T)x^{*}=x^{*}$, equivalently $Tx^{*}=Jx^{*}$. Thus
\begin{equation}\label{xFJ-revised}
x^{*}\in F_J(\Gamma).
\end{equation}

\noindent {\bf Step 4: The limit solves every equilibrium, variational inequality and inclusion problem.}
We give the details of the three membership arguments by writing out the estimates used to identify the strong limit.

First fix $i\in\mathbb N$. Since every integer occurs infinitely often in $\{\sigma(n)\}$, there is a subsequence $\{n_l\}$ such that $\sigma(n_l)=i$ for all $l$. From
$z_{n_l}=T_{r_{n_l}}^{f_i}x_{n_l}$, we have
\begin{equation}\label{EPpass-revised}
f_i(Jz_{n_l},Jy)+\frac{1}{r_{n_l}}\langle y-z_{n_l},Jz_{n_l}-Jx_{n_l}\rangle\geq0,
\qquad \forall y\in C.
\end{equation}
Fix $y\in C$. By the monotonicity of $f_i$,
\[
f_i(Jy,Jz_{n_l})\leq -f_i(Jz_{n_l},Jy).
\]
Combining this with \eqref{EPpass-revised} gives
\begin{equation}\label{EPestimate-detailed}
f_i(Jy,Jz_{n_l})\leq
\frac{1}{r_{n_l}}\langle y-z_{n_l},Jz_{n_l}-Jx_{n_l}\rangle.
\end{equation}
Because $r_{n_l}\geq a>0$, the sequence $\{z_{n_l}\}$ is bounded and
$\|Jz_{n_l}-Jx_{n_l}\|\to0$, we have
\[
\left|\frac{1}{r_{n_l}}\langle y-z_{n_l},Jz_{n_l}-Jx_{n_l}\rangle\right|
\leq \frac{1}{a}\|y-z_{n_l}\|\,\|Jz_{n_l}-Jx_{n_l}\|\to0.
\]
Hence \eqref{EPestimate-detailed} implies
\begin{equation}\label{EPlimsup-detail}
\limsup_{l\to\infty} f_i(Jy,Jz_{n_l})\leq0.
\end{equation}
On the other hand, \eqref{zuqconv-revised} gives $z_{n_l}\to x^{*}$; since $J$ is norm-to-norm continuous on bounded sets, $Jz_{n_l}\to Jx^{*}$. The lower semicontinuity of $f_i(Jy,\cdot)$ now yields
\[
f_i(Jy,Jx^{*})\leq \liminf_{l\to\infty} f_i(Jy,Jz_{n_l}).
\]
Together with \eqref{EPlimsup-detail}, this gives
\begin{equation}\label{EPdualineq-detail}
f_i(Jy,Jx^{*})\leq0,
\qquad \forall y\in C.
\end{equation}
For $t\in(0,1]$, set $w_t^{*}=tJy+(1-t)Jx^{*}\in JC$ and choose $y_t\in C$ with $Jy_t=w_t^{*}$. Applying \eqref{EPdualineq-detail} with $y=y_t$ gives $f_i(Jy_t,Jx^{*})\leq0$. Since $f_i(Jy_t,Jy_t)=0$ and $f_i(Jy_t,\cdot)$ is convex,
\[
0=f_i(Jy_t,Jy_t)\leq t f_i(Jy_t,Jy)+(1-t)f_i(Jy_t,Jx^{*})\leq t f_i(Jy_t,Jy).
\]
Thus $f_i(Jy_t,Jy)\geq0$. Assumption $(A3)$ gives
\[
f_i(Jx^{*},Jy)
\geq \limsup_{t\downarrow0} f_i(tJy+(1-t)Jx^{*},Jy)
=\limsup_{t\downarrow0} f_i(Jy_t,Jy)\geq0.
\]
Therefore $x^{*}\in EP(f_i)$. Since $i$ was arbitrary,
\begin{equation}\label{xEP-revised}
x^{*}\in\bigcap_{i=1}^{\infty}EP(f_i).
\end{equation}

Next fix $j\in\mathbb N$ and take a subsequence $\{m_l\}$ with $\tau(m_l)=j$ for all $l$. Since $u_{m_l}=F_{r_{m_l}}^{A_j}x_{m_l}$,
\begin{equation}\label{VIestimate-start-detail}
\langle y-u_{m_l},A_ju_{m_l}\rangle+
\frac{1}{r_{m_l}}\langle y-u_{m_l},Ju_{m_l}-Jx_{m_l}\rangle\geq0,
\qquad \forall y\in C.
\end{equation}
Fix $y\in C$ and put
\[
\varepsilon_l(y)=\frac{1}{r_{m_l}}\langle y-u_{m_l},Ju_{m_l}-Jx_{m_l}\rangle.
\]
Since $r_{m_l}\geq a>0$, $\{u_{m_l}\}$ is bounded and $\|Ju_{m_l}-Jx_{m_l}\|\to0$, we have
\[
|\varepsilon_l(y)|\leq \frac{1}{a}\|y-u_{m_l}\|\,\|Ju_{m_l}-Jx_{m_l}\|\to0.
\]
Also, $u_{m_l}\to x^{*}$ and the continuity of $A_j$ imply $A_ju_{m_l}\to A_jx^{*}$ in $E^{*}$. Therefore
\[
\langle y-u_{m_l},A_ju_{m_l}\rangle\to \langle y-x^{*},A_jx^{*}\rangle.
\]
If $\langle y-x^{*},A_jx^{*}\rangle<0$, then for all sufficiently large $l$ the sum in \eqref{VIestimate-start-detail} would be negative, contradicting \eqref{VIestimate-start-detail}. Hence
\[
\langle y-x^{*},A_jx^{*}\rangle\geq0,
\qquad \forall y\in C.
\]
Thus $x^{*}\in VI(C,A_j)$. Since $j$ was arbitrary,
\begin{equation}\label{xVI-revised}
x^{*}\in\bigcap_{j=1}^{\infty}VI(C,A_j).
\end{equation}

Finally fix $k\in\mathbb N$ and choose a subsequence $\{s_l\}$ such that $\rho(s_l)=k$ for all $l$. Since $q_{s_l}=G_{r_{s_l}}^{M_k}x_{s_l}$, the resolvent identity gives
\[
\eta_l:=\frac{Jx_{s_l}-Jq_{s_l}}{r_{s_l}}\in M_kq_{s_l}.
\]
The sequence $\{q_{s_l}\}$ converges to $x^{*}$ by \eqref{zuqconv-revised}. Moreover, using $r_{s_l}\geq a>0$ and $\|Jx_{s_l}-Jq_{s_l}\|\to0$, we obtain
\[
\|\eta_l\|\leq \frac{1}{a}\|Jx_{s_l}-Jq_{s_l}\|\to0.
\]
Thus $(q_{s_l},\eta_l)\in G(M_k)$, $q_{s_l}\to x^{*}$ and $\eta_l\to0$. By the closedness of the graph of $M_k$ stated in Lemma \ref{closedgraphM}, $0\in M_kx^{*}$. Hence $x^{*}\in M_k^{-1}0$. Since $k$ was arbitrary,
\begin{equation}\label{xM-revised}
x^{*}\in\bigcap_{k=1}^{\infty}M_k^{-1}0.
\end{equation}
Combining \eqref{xFJ-revised}, \eqref{xEP-revised}, \eqref{xVI-revised} and \eqref{xM-revised}, we obtain $x^{*}\in B$.

\noindent {\bf Step 5: Identification of the limit.}
Let $b=\Pi_Bx$. Since $b\in B\subset C_n$ and $x_n=\Pi_{C_n}x$, we have
\[
\phi(x_n,x)\leq \phi(b,x),\qquad n\geq1.
\]
Letting $n\to\infty$ gives $\phi(x^{*},x)\leq\phi(b,x)$. On the other hand, $x^{*}\in B$ and $b=\Pi_Bx$, so
\[
\phi(b,x)\leq \phi(x^{*},x).
\]
Consequently, $\phi(x^{*},x)=\phi(b,x)$. The generalized projection onto a nonempty closed and convex set is unique; hence $x^{*}=b=\Pi_Bx$. This completes the proof.
\end{proof}

\begin{proposition}[Asymptotic residuals]\label{residualprop}
Under the assumptions of Theorem \ref{mainthm}, the auxiliary sequences generated by \eqref{alg} satisfy
\[
\|x_n-z_n\|\to0,\qquad \|x_n-u_n\|\to0,\qquad \|x_n-q_n\|\to0,
\]
and
\[
\|Jq_n-T_nq_n\|\to0.
\]
Moreover, for each fixed $i,j,k\in\mathbb N$, along subsequences for which $\sigma(n_l)=i$, $\tau(m_l)=j$ and $\rho(s_l)=k$, respectively, one has
\[
\frac{\|Jz_{n_l}-Jx_{n_l}\|}{r_{n_l}}\to0,\qquad
\frac{\|Ju_{m_l}-Jx_{m_l}\|}{r_{m_l}}\to0,\qquad
\frac{\|Jx_{s_l}-Jq_{s_l}\|}{r_{s_l}}\to0.
\]
These limits express the asymptotic satisfaction of the activated equilibrium, variational inequality and maximal monotone inclusion steps.
\end{proposition}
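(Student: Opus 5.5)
This proposition is essentially a repackaging of estimates already established in Steps 1 and 2 of the proof of Theorem \ref{mainthm}, so the plan is to extract them rather than prove anything new.

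First, I would recall from Step 1 that $x_n\to x^{*}$. From Step 2 we have $z_n\to x^{*}$, $u_n\to x^{*}$ and $q_n\to x^{*}$ (see \eqref{zuqconv-revised}). The triangle inequality then gives
\[
\|x_n-z_n\|\leq\|x_n-x^{*}\|+\|x^{*}-z_n\|\to0,
\]
and the same bound handles $u_n$ and $q_n$. Alternatively, one can apply the uniform continuity of $J_{*}$ on bounded subsets of $E^{*}$ directly to $\|Jx_n-Jz_n\|\to0$, $\|Jx_n-Ju_n\|\to0$ and $\|Jx_n-Jq_n\|\to0$. The residual $\|Jq_n-T_nq_n\|\to0$ is exactly \eqref{Treg-revised}.

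For the scaled residuals, I would use $r_n\geq a>0$ to obtain, along any subsequence,
\[
\frac{\|Jz_{n_l}-Jx_{n_l}\|}{r_{n_l}}\leq\frac1a\|Jz_{n_l}-Jx_{n_l}\|.
\]
The right-hand side tends to zero because the full sequence $\|Jx_n-Jz_n\|$ tends to zero, as obtained from \eqref{gterms-revised} in Step 2. The $u$ and $q$ terms are treated identically. The subsequence restriction $\sigma(n_l)=i$ plays no role beyond selecting indices, since convergence of the whole sequence passes to every subsequence.

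The only point needing care, and the one I would check first, is the legitimacy of \eqref{gterms-revised}. Its right-hand side must tend to zero, and this holds because $\phi(p,x_n)-\phi(p,y_n)\to0$, which follows from $x_n\to x^{*}$, $y_n\to x^{*}$ and the continuity of $\phi(p,\cdot)$ (via norm continuity of $J$). In addition, all five points must lie in a fixed ball $B_R(0)$, which the boundedness argument of Step 2 guarantees. Once this is confirmed, every limit in the proposition follows immediately.
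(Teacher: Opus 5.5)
Your proposal is correct and follows the paper's proof. Both arguments read off $\|Jx_n-Jz_n\|,\ \|Jx_n-Ju_n\|,\ \|Jx_n-Jq_n\|\to0$ from Step 2 and pass to norm residuals via uniform continuity of $J_{*}$ on bounded sets (your triangle-inequality route through \eqref{zuqconv-revised} amounts to the same thing, since that display was obtained exactly this way); they take $\|Jq_n-T_nq_n\|\to0$ from \eqref{Treg-revised} and use $r_n\geq a>0$ for the scaled subsequential limits, and your check that the right-hand side of \eqref{gterms-revised} vanishes by continuity of $\phi(p,\cdot)$ correctly fills in a step the paper states without comment.
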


\begin{proof}
The first three limits follow from Step 2 of the proof of Theorem \ref{mainthm}, where it was shown that
\[
\|Jx_n-Jz_n\|\to0,\qquad \|Jx_n-Ju_n\|\to0,\qquad \|Jx_n-Jq_n\|\to0.
\]
Since $J^{-1}$ is uniformly continuous on bounded subsets of $E^{*}$, these imply $\|x_n-z_n\|\to0$, $\|x_n-u_n\|\to0$ and $\|x_n-q_n\|\to0$. The estimate \eqref{Treg-revised} gives $\|Jq_n-T_nq_n\|\to0$. Finally, because $r_n\geq a>0$, the displayed subsequential limits follow immediately from the preceding convergence of the corresponding duality-map residuals.
\end{proof}

\begin{remark}
Theorem \ref{mainthm} extends the hybrid method of \cite{UbaCarpathian2023} in two directions. First, the finite families of equilibrium and variational inequality problems are replaced by countable families. Secondly, a countable family of maximal monotone inclusion problems is incorporated through generalized resolvents. If the family $\{M_k\}$ is removed and if the index maps $\sigma$ and $\tau$ take values in finite sets, the method reduces to a form of the hybrid scheme studied in \cite{UbaCarpathian2023}.
\end{remark}

\section{Applications and corollaries}

\begin{corollary}\label{withoutMM}
Let the assumptions of Theorem \ref{mainthm} hold, but suppose that the maximal monotone component is absent. Define
\[
B_1=F_J(\Gamma)\cap\left(\bigcap_{i=1}^{\infty}EP(f_i)\right)
\cap\left(\bigcap_{j=1}^{\infty}VI(C,A_j)\right).
\]
Assume that $B_1$ is nonempty, closed and convex.
Let $\{x_n\}$ be generated by
\[
\left\{
\begin{array}{ll}
x_1=x\in C,\quad C_1=C,\\
z_n=T_{r_n}^{\sigma(n)}x_n,\\
u_n=F_{r_n}^{\tau(n)}x_n,\\
y_n=J^{-1}(\alpha_1Jx_n+\alpha_2Jz_n+\alpha_3Ju_n+\alpha_4T_nu_n),\\
C_{n+1}=\{v\in C_n:\phi(v,y_n)\leq\phi(v,x_n)\},\\
x_{n+1}=\Pi_{C_{n+1}}x.
\end{array}
\right.
\]
Assume that $\alpha_i\in(0,1)$ for $i=1,2,3,4$, $\sum_{i=1}^{4}\alpha_i=1$, and $\{T_n\}$ satisfies the NST-condition with $\Gamma$. Then $x_n\to \Pi_{B_1}x$.
\end{corollary}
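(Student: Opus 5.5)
Two routes are available: specialize Theorem \ref{mainthm} by choosing the maximal monotone component trivially, or rerun its proof with the $q_n$-component removed. The specialization is not quite literal, because the scheme here applies $T_n$ to $u_n$ rather than to a resolvent output $q_n$. I will therefore follow the five-step structure of the proof of Theorem \ref{mainthm} directly, replacing $q_n$ by $u_n$ throughout. As a sanity check, this is the specialization one gets from Theorem \ref{mainthm} by taking every $M_k$ to be the normal-cone-type operator whose resolvent is the identity on $C$ and merging the weights $\alpha_4,\alpha_5$.

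First I will establish the analogue of Lemma \ref{welldefined}. Fix $p\in B_1$. Lemmas \ref{EPrezolvent} and \ref{VIrezolvent} give $\phi(p,z_n)\le\phi(p,x_n)$ and $\phi(p,u_n)\le\phi(p,x_n)$. Since $p\in F_J(T_n)$, generalized $J_*$-nonexpansiveness gives $\phi(p,J^{-1}T_nu_n)\le\phi(p,u_n)\le\phi(p,x_n)$. Convexity of $\|\cdot\|^2$ on $E^*$ applied to $Jy_n$ then yields $\phi(p,y_n)\le\phi(p,x_n)$. Hence $B_1\subset C_n$ by induction. Each $C_{n+1}$ is $C_n$ intersected with a closed half-space, so the sets are nonempty, closed and convex, and the projections exist.

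Steps 1 and 2 then carry over verbatim. Boundedness follows from $\phi(x_n,x)\le\phi(p,x)$. The monotonicity of $\phi(x_n,x)$ and Lemma \ref{projectionlemma} give $\phi(x_m,x_n)\to0$, so $x_n\to x^*$ by Lemma \ref{phitozero}, and then $y_n\to x^*$. Next I apply Lemma \ref{glemma} in $E^*$ to the four points $Jx_n,Jz_n,Ju_n,T_nu_n$, each paired with $Jx_n$. This gives
\[
\alpha_1\alpha_\ell\, g(\|Jx_n-w_{\ell,n}\|)\le\phi(p,x_n)-\phi(p,y_n)\to0 .
\]
Hence $\|Jx_n-Jz_n\|$, $\|Jx_n-Ju_n\|$ and $\|Jx_n-T_nu_n\|$ all tend to $0$. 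Uniform continuity of $J^{-1}$ on bounded sets then gives $z_n\to x^*$ and $u_n\to x^*$, and also $\|Ju_n-T_nu_n\|\to0$.

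For Step 3, I apply the NST-condition to the bounded sequence $\{u_n\}$, which gives $\|Ju_n-Tu_n\|\to0$ for each $T\in\Gamma$. Combined with $u_n\to x^*$ and $J_*$-closedness, this shows $x^*\in F_J(\Gamma)$. Step 4 is copied directly from the proof of Theorem \ref{mainthm}. For the equilibrium part, I take subsequences with $\sigma(n_l)=i$ and use monotonicity, lower semicontinuity, convexity and (A3). For the variational inequality part, I take subsequences with $\tau(m_l)=j$ and use the continuity of $A_j$. Together these give $x^*\in B_1$. Step 5 identifies the limit: $\phi(x_n,x)\le\phi(\Pi_{B_1}x,x)$, so the uniqueness of the generalized projection gives $x^*=\Pi_{B_1}x$.

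The only genuinely new point is that $T_n$ now acts on $u_n$, so the NST-condition and $J_*$-closedness must be applied along $\{u_n\}$ rather than $\{q_n\}$. This requires $u_n\in C$, which holds because $F_r^A$ maps into $C$. It also requires $\|Ju_n-T_nu_n\|\to0$, which follows from the triangle inequality through $Jx_n$ as above.
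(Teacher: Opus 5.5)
Your proof is correct and follows the paper's route: rerun the proof of Theorem \ref{mainthm} with the $q_n$-step deleted. Along the way you make explicit a detail the paper leaves implicit, namely that the NST-condition and $J_*$-closedness are applied along the sequence $\{u_n\}\subset C$. The only slip is your ``sanity check'': if every $G_r^{M_k}$ is the identity on $C$, then $q_n=x_n$ and the theorem's scheme becomes $J^{-1}\bigl((\alpha_1+\alpha_4)Jx_n+\alpha_2Jz_n+\alpha_3Ju_n+\alpha_5T_nx_n\bigr)$, which has $T_nx_n$ rather than $T_nu_n$ and merges $\alpha_1,\alpha_4$ rather than $\alpha_4,\alpha_5$; since your direct argument never uses that remark, the proof is unaffected.
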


\begin{proof}
The proof is the same as the proof of Theorem \ref{mainthm} with the maximal monotone resolvent step removed. Indeed, the inequalities obtained from the equilibrium and variational-inequality resolvents and from the generalized $J_{*}$-nonexpansiveness of $T_n$ imply $B_1\subset C_n$ for all $n$. The shrinking projection argument gives that $\{x_n\}$ is Cauchy. The index maps $\sigma$ and $\tau$ allow the same subsequence-selection argument used in Step 4 of Theorem \ref{mainthm}; hence the strong limit belongs to every $EP(f_i)$ and every $VI(C,A_j)$. The NST-condition then gives membership in $F_J(\Gamma)$. Hence the limit belongs to $B_1$, and the uniqueness of the generalized projection identifies it with $\Pi_{B_1}x$.
\end{proof}

\begin{corollary}\label{maxonly}
Let $E$ be a uniformly smooth and uniformly convex real Banach space and let $\{M_k\}_{k=1}^{\infty}$ be a countable family of maximal monotone operators such that
\[
B_2:=\bigcap_{k=1}^{\infty}M_k^{-1}0
\]
is nonempty, closed and convex. Let $\rho:\mathbb{N}\to\mathbb{N}$ be such that every positive integer occurs infinitely often. Let
\[
\left\{
\begin{array}{ll}
x_1=x\in E,\quad C_1=E,\\
q_n=G_{r_n}^{M_{\rho(n)}}x_n,\\
C_{n+1}=\{v\in C_n:\phi(v,q_n)\leq\phi(v,x_n)\},\\
x_{n+1}=\Pi_{C_{n+1}}x,
\end{array}
\right.
\]
where $\{r_n\}\subset[a,\infty)$ for some $a>0$. Then $x_n\to \Pi_{B_2}x$.
\end{corollary}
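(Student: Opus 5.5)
Corollary \ref{maxonly} is the special case $C=E$ of the shrinking-projection argument in Theorem \ref{mainthm}, with only the resolvent step kept. The plan is to repeat Steps 1, 2, 4 and 5 of that proof for this scheme. The fixed point, equilibrium and variational inequality parts are dropped. The averaging in $E^{*}$ is not needed, because $y_n$ is replaced directly by $q_n$.

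\textbf{Well-definedness and Cauchy property.} First, as in Lemma \ref{welldefined}, I show by induction that $B_2\subset C_n$ and that $C_n$ is closed and convex. For $p\in B_2$ we have $p\in M_{\rho(n)}^{-1}0$. Lemma \ref{MMresolvent} then gives $\phi(p,q_n)\le\phi(p,q_n)+\phi(q_n,x_n)\le\phi(p,x_n)$, so $p\in C_{n+1}$. The set $C_{n+1}$ is $C_n$ intersected with the closed half-space $\{v:2\langle v,Jx_n-Jq_n\rangle\le\|x_n\|^2-\|q_n\|^2\}$, hence closed and convex, and $\Pi_{C_{n+1}}x$ exists. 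Next I repeat Step 1 verbatim. Lemma \ref{projectionlemma} shows that $\phi(x_n,x)\le\phi(p,x)$ and that $\phi(x_n,x)$ is nondecreasing. Inequality \eqref{cauchyphi-revised} then gives $\phi(x_m,x_n)\to0$, so $x_n\to x^{*}$ for some $x^{*}\in E$ by Lemma \ref{phitozero}.

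\textbf{Residuals and identification of the limit.} Here the argument is simpler than Step 2. From $x_{n+1}\in C_{n+1}$ we get $\phi(x_{n+1},q_n)\le\phi(x_{n+1},x_n)\to0$, so $q_n-x_{n+1}\to0$ and hence $q_n\to x^{*}$. Boundedness of $\{q_n\}$ also follows from $\phi(p,q_n)\le\phi(p,x_n)$. Since $J$ is uniformly continuous on bounded sets, $\|Jx_n-Jq_n\|\to0$. I then fix $k$ and choose $\{s_l\}$ with $\rho(s_l)=k$. The resolvent identity \eqref{resolventidentity} gives $\eta_l=(Jx_{s_l}-Jq_{s_l})/r_{s_l}\in M_kq_{s_l}$ with $\|\eta_l\|\le a^{-1}\|Jx_{s_l}-Jq_{s_l}\|\to0$ and $q_{s_l}\to x^{*}$. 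Lemma \ref{closedgraphM} yields $0\in M_kx^{*}$, so $x^{*}\in B_2$. Finally, Step 5 applies unchanged: $\phi(x^{*},x)\le\phi(\Pi_{B_2}x,x)\le\phi(x^{*},x)$, and uniqueness of the generalized projection gives $x^{*}=\Pi_{B_2}x$.

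\textbf{Main obstacle.} The only point needing care is that Lemma \ref{MMresolvent} as stated assumes $M^{-1}0\ne\emptyset$ for the single operator. This holds here because $B_2\ne\emptyset$ implies $M_k^{-1}0\ne\emptyset$ for every $k$. The invariance hypothesis $G_r^{M_k}(C)\subset C$ holds trivially since $C=E$. Everything else transfers directly.
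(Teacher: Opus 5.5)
Your proposal is correct and follows essentially the same route as the paper's proof. The shared steps are: $B_2\subset C_n$ via Lemma~\ref{MMresolvent}, the Step~1 Cauchy argument, $q_n\to x^{*}$ from $\phi(x_{n+1},q_n)\le\phi(x_{n+1},x_n)\to0$, graph closedness of $M_k$ along a subsequence with $\rho(s_l)=k$, and identification of the limit through the minimality of $x_n=\Pi_{C_n}x$ and uniqueness of $\Pi_{B_2}x$. You also spell out details the paper leaves implicit: the half-space form of $C_{n+1}$, the passage to $\|Jx_n-Jq_n\|\to0$, and the nonemptiness of each $M_k^{-1}0$ needed to invoke the resolvent lemma.
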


\begin{proof}
For $p\in B_2$, Lemma \ref{MMresolvent} gives $\phi(p,q_n)\leq\phi(p,x_n)$; hence $B_2\subset C_n$ for every $n$. The same generalized projection argument used in Step 1 of Theorem \ref{mainthm} shows that $\{x_n\}$ is Cauchy and therefore $x_n\to x^{*}$ for some $x^{*}\in E$. Since $x_{n+1}\in C_{n+1}$, one also obtains $q_n-x_n\to0$. Fix $k\in\mathbb N$ and choose a subsequence $\{n_l\}$ such that $\rho(n_l)=k$. From the resolvent identity,
\[
\frac{Jx_{n_l}-Jq_{n_l}}{r_{n_l}}\in M_kq_{n_l}.
\]
Because $r_{n_l}\geq a>0$ and $q_{n_l}\to x^{*}$, the left-hand side converges to $0$. Lemma \ref{closedgraphM} yields $0\in M_kx^{*}$. Since $k$ was arbitrary, $x^{*}\in B_2$. Finally, the minimality property of $x_n=\Pi_{C_n}x$ and the uniqueness of the generalized projection give $x^{*}=\Pi_{B_2}x$.
\end{proof}

\begin{corollary}[Countable convex minimization]\label{convexmin}
Let $E$ be a uniformly smooth and uniformly convex real Banach space, and let $\{h_k\}_{k=1}^{\infty}$ be proper, convex and lower semicontinuous functions on $E$. Suppose that
\[
B_3:=\bigcap_{k=1}^{\infty}\operatorname*{argmin}_{x\in E} h_k(x)
\]
is nonempty, closed and convex. For each $k\in\mathbb N$, let $M_k=\partial h_k$. Let $\rho:\mathbb N\to\mathbb N$ be an index map such that every positive integer occurs infinitely often. Define
\[
\left\{
\begin{array}{ll}
x_1=x\in E,\quad C_1=E,\\
q_n=(J+r_n\partial h_{\rho(n)})^{-1}Jx_n,\\
C_{n+1}=\{v\in C_n:\phi(v,q_n)\leq\phi(v,x_n)\},\\
x_{n+1}=\Pi_{C_{n+1}}x,
\end{array}
\right.
\]
where $\{r_n\}\subset[a,\infty)$ for some $a>0$. Then $x_n\to\Pi_{B_3}x$.
\end{corollary}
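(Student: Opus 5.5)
The plan is to reduce Corollary \ref{convexmin} to Corollary \ref{maxonly} with $M_k=\partial h_k$. By Rockafellar's theorem, the subdifferential of a proper, convex and lower semicontinuous function on a Banach space is maximal monotone, so each $M_k=\partial h_k:E\to 2^{E^{*}}$ is maximal monotone. The recursion in Corollary \ref{convexmin} then coincides literally with that of Corollary \ref{maxonly}, since $(J+r_n\partial h_{\rho(n)})^{-1}Jx_n=G_{r_n}^{M_{\rho(n)}}x_n$.

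It remains to identify the target sets. By the definition of the subdifferential, $0\in\partial h_k(z)$ holds if and only if $h_k(y)\geq h_k(z)+\langle y-z,0\rangle=h_k(z)$ for all $y\in E$, that is, if and only if $z\in\operatorname*{argmin}_{E}h_k$. Here $h_k$ being proper guarantees $h_k(z)$ is finite at any minimizer in the nonempty intersection. Hence $(\partial h_k)^{-1}0=\operatorname*{argmin}h_k$ for every $k$, and so $B_3=\bigcap_{k}(\partial h_k)^{-1}0=B_2$. This set is nonempty, closed and convex by hypothesis.

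One point needs checking. Lemma \ref{MMresolvent} requires $M_k^{-1}0\neq\emptyset$, which holds because $B_3\neq\emptyset$ and $B_3\subset\operatorname*{argmin}h_k$ for each $k$. With this, all hypotheses of Corollary \ref{maxonly} are met, and it yields $x_n\to\Pi_{B_2}x=\Pi_{B_3}x$.

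The only nontrivial ingredient is the maximal monotonicity of $\partial h_k$ in a general Banach space. I would cite this rather than prove it, as \cite{Rockafellar1970} is already in the bibliography. The rest is bookkeeping.
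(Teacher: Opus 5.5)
Your proposal is correct and follows the paper's proof. Both reduce to Corollary \ref{maxonly} with $M_k=\partial h_k$, using Rockafellar's maximal monotonicity of subdifferentials and the Fermat rule $(\partial h_k)^{-1}0=\operatorname*{argmin}_{x\in E}h_k(x)$. Your derivation of that rule from the definition of $\partial h_k$, and your check that each $M_k^{-1}0\neq\emptyset$ as Lemma \ref{MMresolvent} requires, simply make explicit what the paper leaves implicit.
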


\begin{proof}
For each $k$, the subdifferential $\partial h_k$ is maximal monotone, and the Fermat rule for convex subdifferentials (see, for example, \cite[Section 23]{Rockafellar1970CA}) gives
\[
(\partial h_k)^{-1}0=\operatorname*{argmin}_{x\in E}h_k(x).
\]
Thus the assertion follows from Corollary \ref{maxonly} by taking $M_k=\partial h_k$ for every $k\in\mathbb N$.
\end{proof}

\begin{corollary}\label{hilbert}
Let $E=H$ be a real Hilbert space and let $C$ be a nonempty closed and convex subset of $H$. Let $\{f_i\}_{i=1}^{\infty}$ be a countable family of bifunctions from $C\times C$ into $\mathbb{R}$ satisfying $(A1)-(A4)$, let $\{A_j\}_{j=1}^{\infty}$ be a countable family of continuous monotone maps from $C$ into $H$, let $\{M_k\}_{k=1}^{\infty}$ be a countable family of maximal monotone operators on $H$, and let $\{T_n\}$ be a countable family of nonexpansive-type maps satisfying the NST-condition with $\Gamma$. Put
\[
F(\Gamma):=\bigcap_{T\in\Gamma}F(T),
\]
where $F(T)=\{z\in C:Tz=z\}$. Suppose
\[
B=F(\Gamma)\cap\left(\bigcap_{i=1}^{\infty}EP(f_i)\right)
\cap\left(\bigcap_{j=1}^{\infty}VI(C,A_j)\right)
\cap\left(\bigcap_{k=1}^{\infty}M_k^{-1}0\right)\neq\emptyset.
\]
Then the Hilbert space version of \eqref{alg}, with $J=I$ and $\Pi_{C_n}=P_{C_n}$, converges strongly to $P_Bx$.
\end{corollary}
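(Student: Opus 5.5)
The plan is to obtain Corollary \ref{hilbert} as a direct specialization of Theorem \ref{mainthm}, checking that every hypothesis of the theorem is met when $E=H$ is a real Hilbert space. Identify $H^{*}$ with $H$ through the Riesz representation. Then $J=J_{*}=I$, and $\phi(x,y)=\|x-y\|^{2}$, so $\Pi_K=P_K$ for every nonempty closed convex $K$. A Hilbert space is uniformly smooth and uniformly convex, and $JC=C$ is closed and convex. The bifunctions $f_i$ on $JC\times JC=C\times C$ satisfy (A1)--(A4) exactly as stated. The maps $A_j:C\to H=H^{*}$ are continuous and monotone. The resolvents $G_r^{M_k}=(I+rM_k)^{-1}$ reduce to the classical resolvents. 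With these identifications, algorithm \eqref{alg} becomes precisely the ``Hilbert space version'' mentioned in the corollary.

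Next, I would translate the fixed point notions. With $J=I$, a $J$-fixed point of $T:C\to H$ is an ordinary fixed point, so $F_J(T)=F(T)$ and $F_J(\Gamma)=F(\Gamma)$. Generalized $J_{*}$-nonexpansiveness reads $\|p-Tx\|\le\|p-x\|$ for $p\in F(T)$, which is quasi-nonexpansiveness. The phrase ``nonexpansive-type maps'' will be interpreted as covering this. In particular, nonexpansive maps with $F(T)\neq\emptyset$ qualify. Such maps are also continuous, hence closed, so they satisfy $J_{*}$-closedness for $\Gamma$. The NST-condition is assumed verbatim, and $\bigcap_n F(T_n)=F(\Gamma)$ is implicitly part of the standing framework. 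The index maps $\sigma,\tau,\rho$, the weights $\alpha_i$ and $\{r_n\}\subset[a,\infty)$ are taken as in Section 3.

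The remaining hypothesis of Theorem \ref{mainthm} is that $B$ is closed and convex, and I must also verify $G_r^{M_k}(C)\subset C$. Closedness and convexity of $B$ hold automatically in the Hilbert setting. $F(T)$ is closed and convex for quasi-nonexpansive continuous $T$. $EP(f_i)$ and $VI(C,A_j)$ are closed and convex by Lemmas \ref{EPrezolvent} and \ref{VIrezolvent}, since $J=I$. Each $M_k^{-1}0$ is closed and convex for maximal monotone $M_k$. Arbitrary intersections preserve both properties.

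The invariance $G_r^{M_k}(C)\subset C$ is the \emph{main obstacle}, since it is not listed explicitly in the corollary. I would treat it as inherited from the ``Hilbert space version'' of the standing assumptions, as the corollary intends. Alternatively, when $C=H$ it holds trivially. With all hypotheses verified, Theorem \ref{mainthm} gives $x_n\to\Pi_Bx=P_Bx$.
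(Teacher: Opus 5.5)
Your proposal is correct and takes the same route as the paper: a direct specialization of Theorem \ref{mainthm} with $J=J_{*}=I$, $\phi(x,y)=\|x-y\|^{2}$ and $\Pi_{C_n}=P_{C_n}$. You are more careful than the paper's two-line proof, which implicitly assumes the hypotheses you make explicit. You check that $B$ is automatically closed and convex when $J=I$; the theorem requires this, but the corollary does not assume it. You also correctly note that $G_r^{M_k}(C)\subset C$, $\bigcap_n F(T_n)=F(\Gamma)$ and the closedness of the maps in $\Gamma$ must be carried over as unstated hypotheses.
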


\begin{proof}
In a Hilbert space the normalized duality mapping is the identity, the Lyapunov functional reduces to $\phi(x,y)=\|x-y\|^2$, and the generalized projection $\Pi_D$ coincides with the metric projection $P_D$ onto each nonempty closed and convex set $D$. Therefore all resolvent and shrinking-projection steps in \eqref{alg} reduce to their Hilbert-space counterparts. Applying Theorem \ref{mainthm} with $J=I$ gives the stated strong convergence to $P_Bx$.
\end{proof}

\begin{remark}
Theorem \ref{mainthm} and its corollaries are applicable in classical Banach spaces such as $L_p$, $\ell_p$ and $W_p^m(\Omega)$, $1<p<\infty$, under the usual smoothness and uniform convexity assumptions. In these spaces, the normalized duality map has known analytic representations, which makes the algorithm more explicit.
\end{remark}

\begin{proposition}[Finite truncations need not recover the countable problem]\label{finite-truncation-prop}
There exists a countable family of maximal monotone inclusion problems in a uniformly smooth and uniformly convex Banach space for which the full countable intersection is a singleton, while every finite truncation has an infinite-dimensional solution set.
\end{proposition}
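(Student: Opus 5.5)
The plan is to give an explicit construction in the separable Hilbert space $H=\ell_2$, which is uniformly smooth and uniformly convex; there $J=I$ and the maximal monotone operators can be taken to be normal cones of closed subspaces. Let $\{e_k\}_{k\ge1}$ be the standard orthonormal basis. For each $k\in\mathbb N$ put
\[
L_k:=\{x\in\ell_2:\langle x,e_k\rangle=0\},
\]
a closed subspace of codimension one. Define $M_k:=N_{L_k}$, the normal cone operator of $L_k$. Thus $M_kx=L_k^{\perp}=\operatorname{span}\{e_k\}$ for $x\in L_k$, and $M_kx=\emptyset$ otherwise. Equivalently, $M_k=\partial\iota_{L_k}$, where $\iota_{L_k}$ is the indicator function of $L_k$. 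This is proper, convex and lower semicontinuous, so $M_k$ is maximal monotone by the fact already used in the proof of Corollary \ref{convexmin}.

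Next I will compute the zero sets. By the Fermat rule, $M_k^{-1}0=\operatorname{argmin}\iota_{L_k}=L_k$. One can also see this directly: for $x\in L_k$ we have $0\in L_k^{\perp}=M_kx$, and $M_kx$ is empty off $L_k$.

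The full countable intersection is
\[
\bigcap_{k=1}^\infty M_k^{-1}0=\{x\in\ell_2:\langle x,e_k\rangle=0\ \forall k\}=\{0\},
\]
which is a singleton because $\{e_k\}$ is complete.

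For a finite truncation $\{M_1,\dots,M_N\}$ the solution set is
\[
\bigcap_{k=1}^N L_k=\overline{\operatorname{span}}\{e_k:k>N\},
\]
which is a closed subspace isometric to $\ell_2$ and hence infinite-dimensional. The same argument applies to any finite subfamily $\{M_k:k\in F\}$ with $F\subset\mathbb N$ finite, since the intersection contains $e_m$ for every $m\notin F$. So no finite selection of the inclusion problems recovers the countable problem.

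No step is really an obstacle. The only point needing care is the justification of maximal monotonicity, which I will cite via subdifferentials of indicator functions. If the convention $G^{M_k}_r(C)\subset C$ is wanted for compatibility with Theorem \ref{mainthm}, I will take $C=\ell_2$ and note that $G_r^{M_k}=P_{L_k}$ is the orthogonal projection, $x\mapsto x-\langle x,e_k\rangle e_k$, which is independent of $r$. This also shows that the example fits Corollary \ref{maxonly}: the scheme there converges to $P_{\{0\}}x=0$, while any finite-family method would only converge to the projection of $x$ onto the larger truncated subspace.
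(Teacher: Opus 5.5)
Your proof is correct and is essentially the paper's construction: both realize the coordinate hyperplanes $\{x : x_k = 0\}$ as zero sets of subdifferentials of proper convex lower semicontinuous functions on a sequence space. Hence the countable intersection is $\{0\}$, while every finite intersection is an infinite-dimensional closed subspace. The only difference is the choice of function: the paper works in $\ell_p$ with the single-valued, everywhere-defined operators $\partial(\frac{1}{p}|x_k|^p)$, whereas you use the normal cones $N_{L_k}=\partial\iota_{L_k}$ in $\ell_2$, whose resolvents are the orthogonal projections $P_{L_k}$.
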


\begin{proof}
Let $E=\ell_p$, $1<p<\infty$, and for each $k\in\mathbb N$ define
\[
h_k(x)=\frac{1}{p}|x_k|^p,\qquad x=(x_1,x_2,\ldots)\in\ell_p.
\]
Then $h_k$ is proper, convex and continuous. Define
\[
\psi_p(t)=
\begin{cases}
|t|^{p-2}t, & t\neq 0,\\
0, & t=0.
\end{cases}
\]
The subdifferential $M_k:=\partial h_k$ is the single-valued maximal monotone operator
\[
M_kx=\partial h_k(x)=\psi_p(x_k)e_k^{*},
\]
where $e_k^{*}$ is the $k$th coordinate functional in $\ell_q=(\ell_p)^{*}$, $1/p+1/q=1$. Hence
\[
M_k^{-1}0=(\partial h_k)^{-1}0=\{x\in\ell_p:x_k=0\}.
\]
Consequently,
\[
\bigcap_{k=1}^{\infty}M_k^{-1}0=\{0\},
\]
whereas, for each finite $N$,
\[
\bigcap_{k=1}^{N}M_k^{-1}0
=\{x\in\ell_p:x_1=x_2=\cdots=x_N=0\},
\]
which is an infinite-dimensional closed subspace of $\ell_p$. Thus no finite truncation captures the full countable intersection in this example.
\end{proof}

\begin{example}[An illustrative Hilbert-space specialization]\label{hilbert-illustrative-example}
Let $H=\ell_2$, let $C=H$, take $T_n=I$ for all $n$, and take the trivial equilibrium and variational inequality data
\[
f_i(u,v)=0,\qquad A_jx=0,
\]
for all $i,j\in\mathbb N$. For each $k\in\mathbb N$, set
\[
h_k(x)=\frac12 |x_k|^2,\qquad M_k=\partial h_k,
\]
so that $M_kx=x_ke_k$ and $M_k^{-1}0=\{x\in\ell_2:x_k=0\}$. Hence the common solution set is
\[
B=\bigcap_{k=1}^{\infty}M_k^{-1}0=\{0\}.
\]
If $\rho$ is the triangular index map described above before \eqref{Bset}, then every coordinate constraint is activated infinitely often. In this setting the maximal monotone resolvent is explicit:
\[
q_n=(I+r_nM_{\rho(n)})^{-1}x_n,
\]
that is,
\[
(q_n)_m=(x_n)_m \quad (m\neq \rho(n)),\qquad
(q_n)_{\rho(n)}=\frac{(x_n)_{\rho(n)}}{1+r_n}.
\]
Thus each activated step damps one coordinate of the current iterate, and the shrinking projection step keeps the iterates in nested half-spaces containing the common solution set $\{0\}$. Theorem \ref{mainthm} therefore yields $x_n\to 0$, the metric projection of the initial point onto $B$. This example illustrates the role of the index map in a concrete Hilbert-space case.
\end{example}

\begin{example}[Common minimizers of countably many convex functions]
Let $E=\ell_p$, $1<p<\infty$, and let $\{h_k\}_{k=1}^{\infty}$ be proper, convex and lower semicontinuous functions on $E$ such that
\[
\bigcap_{k=1}^{\infty}\operatorname*{argmin}_{x\in E}h_k(x)\neq\emptyset.
\]
For each $k\in\mathbb N$, set $M_k=\partial h_k$. Then each $M_k$ is maximal monotone and
\[
M_k^{-1}0=\operatorname*{argmin}_{x\in E}h_k(x).
\]
Corollary \ref{convexmin} therefore gives a strongly convergent proximal-type hybrid algorithm for finding the generalized projection of the initial point onto the common minimizer set of the countable family $\{h_k\}$.
\end{example}

\section{Conclusion}

In this paper, we introduced a hybrid scheme for approximating a common element of the solution sets of countable families of equilibrium problems, variational inequality problems and maximal monotone inclusion problems, together with the common $J$-fixed point set of a countable family of generalized $J_{*}$-nonexpansive mappings. The method uses equilibrium resolvents, variational inequality resolvents, generalized resolvents of maximal monotone operators and a shrinking projection technique. The strong convergence of the generated sequence was established in uniformly smooth and uniformly convex Banach spaces.

The theorem extends related hybrid methods in several directions. First, the equilibrium and variational inequality components are treated as countable families rather than finite families. Secondly, zeros of a countable family of maximal monotone operators are incorporated into the same framework. Thirdly, the residual convergence statement shows that the auxiliary resolvent steps asymptotically satisfy the activated component problems. Since maximal monotone operators include subdifferentials of proper convex lower semicontinuous functions, the result also covers common convex minimization problems. Proposition \ref{finite-truncation-prop} shows that the infinite-family setting cannot in general be recovered from a finite-family theorem by a direct truncation argument, while Example \ref{hilbert-illustrative-example} illustrates the algorithm in a concrete Hilbert-space case with explicit resolvent steps. The Hilbert-space and classical Banach-space consequences show that the theorem can be specialized to settings in which the resolvents and projections have more explicit forms.

\end{document}